\theoremstyle{plain}
\newtheorem{thm}{Theorem}
\newtheorem{lem}[thm]{Lemma}
\newtheorem{cor}[thm]{Corollary}
\newtheorem{prop}[thm]{Proposition}
\newtheorem{ex}[thm]{Example}
\theoremstyle{definition}
\newtheorem{question}[thm]{Question}
\newtheorem{claim}[thm]{Claim}
\newtheorem{rmk}[thm]{Remark}
\newcommand{\Pic}{{\rm Pic}}
\DeclareMathOperator{\Spec}{Spec}
\newcommand{\sC}{{\mathcal C}}
\newcommand{\sF}{{\mathcal F}}
\newcommand{\sM}{{\mathcal M}}
\newcommand{\sN}{{\mathcal N}}
\newcommand{\sO}{{\mathcal O}}
\newcommand{\sU}{{\mathcal U}}
\newcommand{\sX}{{\mathcal X}}
\newcommand{\B}{{\mathbb B}}
\newcommand{\Sp}{{\rm Sp}}
\newcommand{\prolim}[1]{\operatorname{``}\lim\limits_{#1}   \operatorname{''}}
\newcommand{\Ve}{\operatorname{Vec}}
\DeclareMathOperator{\coker}{coker}
\newcommand{\checkH}{H}   
\begin{document}

\title[Non-archimedean Bass--Quillen]{Towards a non-archimedean analytic analog of the Bass--Quillen conjecture}

\author{Moritz Kerz}
\address{Fakult\"at f\"ur Mathematik, Universit\"at Regensburg, 93040 Regensburg, Germany}
\email{moritz.kerz@ur.de}

\author{Shuji Saito}
\address{Graduate School of Mathematical Sciences, 
University of Tokyo,
3-8-1 Komaba, Tokyo, 153-8914,
Japan
}
\email{sshuji@msb.biglobe.ne.jp}

\author{Georg Tamme}
\address{Fakult\"at f\"ur Mathematik, Universit\"at Regensburg, 93040 Regensburg, Germany}
\email{georg.tamme@ur.de}

\thanks{The authors are supported by the DFG through CRC 1085 \emph{Higher Invariants} (Universit\"at Regensburg)}

\begin{abstract}
We suggest an analog of the Bass--Quillen conjecture for smooth affinoid algebras over a complete non-archimedean field.
We prove this in the rank-1 case, i.e.~for the Picard group. For complete discretely valued fields 
and regular affinoid algebras that admit a regular model (automatic if the residue characteristic is zero)
 we prove a similar statement for the Grothendieck group of vector bundles $K_{0}$.
\end{abstract}

\maketitle

\section*{Introduction}

For a ring $A$ let us denote by $\Ve_r(A)$ the set of isomorphism classes of finitely generated projective
modules of rank $r$.
The Bass--Quillen conjecture predicts that for a regular noetherian ring $A$ the inclusion
into the polynomial ring $A[t_1,\ldots, t_n]$ 
induces a bijection
\[
\Ve_r(A)\xrightarrow{\sim} \Ve_r(A[t_1,\ldots, t_n])
\]
for all $n,r\ge 0$. Based on the work of Quillen and Suslin on Serre's problem the
conjecture has been shown in case $A$ is a smooth algebra over a field~\cite{Li}.

In this note we discuss a potential extension of this
conjecture to affinoid algebras in the sense of Tate. Let $K$ be a field which is complete
with respect to a non-trivial non-archimedean absolute value and let $A/K$ be a smooth affinoid
algebra. In rigid geometry a building block is the ring of power series converging on the closed unit
disc   
\[
A\langle t_1 , \ldots , t_n \rangle= \{f=\sum_{\underline k} c_{\underline k} t^{\underline k} \in A\llbracket t_1,\ldots , t_n \rrbracket \, |
\, c_{\underline k}\xrightarrow{|\underline k|\to \infty} 0 \},
\]
which serves as a replacement for the polynomial ring in algebra. 

Using these convergent power series the following positive result in analogy with Serre's problem is
obtained in \cite{L}.

\begin{ex}[L\"utkebohmert]
All finitely generated projective modules over $K\langle t_1,\ldots, t_n\rangle$ are free.
\end{ex}

Unfortunately, over more general smooth affinoid algebras one has the following negative
example \cite[4.2]{Ge}.

\begin{ex}[Gerritzen]
Assume the ring of integers $K^\circ$ of $K$ is a discrete valuation ring with prime element $\pi$.
For the smooth affinoid $K$-algebra $A=K\langle t_1, t_2 \rangle/(t_1^2-t_2^3-\pi)$ the map 
\[
\Pic (A) \to \Pic (A\langle t \rangle )
\]
is not bijective.
\end{ex}

This example shows that for our purpose the ring of convergent power series $A\langle
t\rangle$ is not entirely appropriate. 
Let $\pi\in K\setminus \{ 0\}$ be an element with $|\pi|<1$.
As an improved non-archimedean analytic replacement for the polynomial ring over $A$ we are going to
use the pro-system of affinoid algebras $\prolim{t\mapsto \pi t}   A\langle
t \rangle$. It represents an affinoid approximation of the non-quasi-compact rigid analytic space $(\mathbb
A^1_A)^{\rm an}$ since \[\lim_{t\mapsto \pi t} A\langle
t \rangle= H^0((\mathbb
A^1_A)^{\rm an},\sO).\]
Note that the latter non-affinoid $K$-algebra is harder to control, compare
\cite[Ch.~5]{Gr} and \cite{B4}.

As a non-archimedean analytic analog of the Bass--Quillen conjecture one might ask:

\begin{question}\label{qu.main}
Is the map
\[
\Ve_r(A) \to \prolim{{t\mapsto \pi t} } \Ve_r(A\langle t \rangle)
\]
a pro-isomorphism for $A/K$ a smooth affinoid algebra?
\end{question}

We give a positive answer for $r=1$.

\begin{thm}\label{thm1}
For $A/K$ a smooth affinoid algebra the map
\[
\Pic(A) \to \prolim{t\mapsto \pi t}  \Pic(A\langle t \rangle)
\]
is an isomorphism of pro-abelian groups.
\end{thm}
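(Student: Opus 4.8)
The plan is to reduce the theorem to a pro-nilpotence statement and then to establish that nilpotence by reduction modulo $\pi$. First I would record the splitting coming from the zero section. The augmentation $r\colon A\langle t\rangle \to A$, $t\mapsto 0$, is a retraction of the structural inclusion $\iota\colon A\to A\langle t\rangle$, so $\iota$ induces a split injection $\Pic(A)\hookrightarrow \Pic(A\langle t\rangle)$ with complement
\[
N\Pic(A\langle t\rangle):=\ker\bigl(\Pic(A\langle t\rangle)\xrightarrow{\ t=0\ }\Pic(A)\bigr).
\]
The transition map of the pro-system is induced by $\phi\colon A\langle t\rangle\to A\langle t\rangle$, $t\mapsto \pi t$. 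Since $\phi$ fixes $A$ and commutes with $r$, it respects this decomposition, acting as the identity on the $\Pic(A)$-summand and as some endomorphism $\psi$ on $N\Pic(A\langle t\rangle)$. As each level map $\iota$ is injective, the pro-kernel vanishes, so the map in the theorem is a pro-isomorphism if and only if the tower $(N\Pic(A\langle t\rangle),\psi)$ is pro-zero; and for a constant tower this means $\psi^{m}=0$ for some $m$. Establishing this nilpotence is the goal.

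The key mechanism is that $\phi$ becomes the collapse to the zero section after reduction modulo $\pi$. Choosing a suitable formal model, let $A^\circ$ be the power-bounded subring and $\tilde A = A^\circ/\pi A^\circ$ its reduction, so that $A^\circ\langle t\rangle$ has reduction $\tilde A[t]$. For the $\pi$-adically complete affine ring $A^\circ\langle t\rangle$, lifting line bundles along the nilpotent thickenings $(A^\circ/\pi^{n+1})[t]\to (A^\circ/\pi^{n})[t]$ identifies $\Pic(A^\circ\langle t\rangle)$ with $\Pic(\tilde A[t])$, because $H^1$ and $H^2$ of the structure sheaf vanish on the affine scheme $\mathbb A^1_{\tilde A}$. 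Under this identification $\phi$ reduces to $\bar\phi\colon \tilde A[t]\to\tilde A[t]$, $t\mapsto \bar\pi t=0$, which is the pullback along the zero section; since $t\mapsto 0$ factors through $\tilde A$, its action on $N\Pic(\mathbb A^1_{\tilde A})$ is zero. Thus, on the integral level and for a good (affine) model, $\psi$ vanishes after a single step, no matter how singular $\tilde A$ is. This is exactly what neutralises Gerritzen's example, where $\tilde A$ is a cuspidal cubic.

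Transporting this vanishing from the integral to the analytic Picard group is the step I expect to be the main obstacle. By Raynaud's theory every analytic line bundle descends to a formal model, but in general only after an admissible blow-up, after which the special fibre $\tilde X$ is no longer affine; then $H^1(\mathcal O)$ need not vanish and I would have to track the $\pi$-adic filtration on the Picard groups of the thickenings $\mathbb A^1_{X_n}$, where $X_n=\mathfrak X\otimes K^\circ/\pi^n$. The saving feature is that $\phi$ also contracts the graded pieces of this filtration: on the layer indexed by $\pi^{j}/\pi^{j+1}$, which is a subquotient of $H^1(\mathbb A^1_{\tilde X},\mathcal O)=H^1(\tilde X,\mathcal O)\otimes_k k[t]$, a term $\pi^{j}ct^{d}$ with $d\ge 1$ is sent to $\pi^{j+d}ct^{d}\equiv 0 \pmod{\pi^{j+1}}$, so the part of positive $t$-degree dies and only the pullbacks from $\tilde X$ survive. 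Hence $\phi$ pushes every new class one step deeper in the filtration and is nilpotent on each finite truncation $\mathbb A^1_{X_n}$. The whole difficulty then reduces to showing that a given class in $N\Pic(A\langle t\rangle)$ is detected at a \emph{bounded} $\pi$-adic level, so that a uniform number of applications of $t\mapsto \pi t$ annihilates it; here the smoothness of $A$, together with finiteness and continuity properties of the reduction, should supply the required uniformity.

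Combining the levelwise splitting of the first step with the nilpotence of $\psi$ on $N\Pic(A\langle t\rangle)$ shows that the tower $(N\Pic(A\langle t\rangle),\psi)$ is pro-zero, and therefore that $\Pic(A)\to \prolim{t\mapsto \pi t}\Pic(A\langle t\rangle)$ is a pro-isomorphism, as claimed.
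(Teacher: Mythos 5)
Your reduction of the theorem to the nilpotence of the transition endomorphism $\psi$ on $N\Pic(A\langle t\rangle)$ is correct, and your mod-$\pi$ mechanism (that $t\mapsto \pi t$ reduces to the zero section, hence kills $N\Pic$ of an \emph{affine} integral model, and pushes classes one step deeper in the $\pi$-adic filtration on a blown-up model) is sound as far as it goes. But the step you defer to the last sentence --- ``a given class in $N\Pic(A\langle t\rangle)$ is detected at a \emph{bounded} $\pi$-adic level,'' with the hope that smoothness ``should supply the required uniformity'' --- is not a routine verification: it is the entire content of the theorem. In the paper this is exactly Corollary~\ref{cor.mult-vanishing}, the injectivity of $H^{1}(X,\sO^{*})\to H^{1}(X,\sO^{*}/\sO^{*}(r'))$ for smooth affinoid $X$, which rests on Bartenwerfer's vanishing theorem for the metric cohomology groups $H^{i}(X,\sO(r))$ (Theorem~\ref{thm.van2}, Corollary~\ref{cor.bartenwerfer}); establishing that occupies Sections~\ref{sec.vanad}--\ref{sec.vanishing-mult} and uses Kiehl's finite \'etale covers of polydisks, the wlf-sheaf formalism, van der Put's computation on $\B^{d}$, and a delicate multiplicative bootstrapping. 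Nothing about it is a formal consequence of smoothness plus ``finiteness and continuity of the reduction''; indeed smoothness of the generic fibre coexists with arbitrarily bad reduction (Gerritzen's $A$ is smooth with cuspidal-cubic reduction), and that example shows $N\Pic(A\langle t\rangle)\neq 0$ there, so per-truncation nilpotence on $\mathbb{A}^{1}_{X_{n}}$ with a bound growing in $n$ --- which is all your filtration argument yields --- gives no conclusion about the limit: the uniform bound \emph{is} the theorem.

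Two further points. First, Theorem~\ref{thm1} makes no discreteness assumption on the valuation of $K$, so your apparatus of truncations $A^{\circ}/\pi^{n}$, Noetherian deformation theory, and regular/flat models over $K^{\circ}$ is not available in the stated generality (the paper reserves such model-theoretic arguments for Theorem~\ref{thm2}, which assumes $K$ discretely valued with residue characteristic $0$; there the ``bounded detection'' step is Proposition~\ref{prop.k0inj}, itself requiring pro-cdh-descent --- evidence that this step is heavy even stably). Second, the admissible blow-up needed to spread out a given line bundle depends on the bundle and need not be compatible with the endomorphism $t\mapsto\pi t$, so even the filtration bookkeeping requires care. The paper avoids all of this by a purely analytic route: it works with $\sO^{*}(\infty,r)=\sO^{*}/\sO^{*}(r)$, proves $R^{1}p_{*}\sO^{*}(\infty,r)=0$ fibrewise over analytic points (Lemma~\ref{lem.vanishing-of-R1}), and exploits the contraction estimate $|f(\pi^{n}t)-1|_{\sup}\le|\pi^{n}|\le r$ on units of the fibre disks (Lemma~\ref{lem.pro-iso-direct-image}) --- the analytic incarnation of your mod-$\pi$ collapse --- together with the Leray spectral sequence. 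Your outline correctly identifies the shape of the argument and the key contraction mechanism, but as written it has a genuine gap where the paper's main technical theorem must be invoked or reproved.
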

This  is  stronger than the  statement that $\Pic(A) \to \lim_{t\mapsto \pi t}  \Pic(A\langle t \rangle)$ is an isomorphism.
The latter has the following consequence, which we will prove in Section~\ref{sec.pic}:
\begin{cor}\label{cor.A1invariance}
For $A/K$ a smooth affinoid algebra the map
\[
\Pic(A) \to   \Pic((\mathbb{A}^{1}_{A})^{\rm an})
\]
is an isomorphism.
\end{cor}

The Picard group $\Pic(A)$ of an affinoid algebra $A$ is isomorphic to the 
cohomology group $H^1(\Sp(A), \sO^*)$.

In case the residue field of $K$ has characteristic zero, one has the exponential isomorphism $\exp
: \sO(1)\xrightarrow{\sim} \sO^*(1) $, where $\sO(1)\subset \sO$ is the subsheaf of rigid
analytic functions $f$ with $|f|_{\sup}<1$ and $\sO^*(1)\subset \sO^*$ is the subsheaf
of functions $f$ with $|1-f|_{\sup}<1$. Based on this isomorphism \cite[Satz 4]{Ge} reduces
Theorem~\ref{thm1} in case of characteristic zero to a vanishing result for the additive rigid cohomology group
$H^1(\Sp(A), \sO(1))$ which is established in \cite{B1}. As the articles \cite{B1} and \cite{B2}
are written in German and are not easy to read, we  give a simplified proof of their main
results in Section~\ref{sec.vanad} based on the cohomology theory of affinoid spaces \cite{vdP}. 

However in case ${\rm ch}(K)>0$ this approach using the exponential isomorphism does not
apply. Instead, in Section~\ref{sec.vanishing-mult} we
explain how to pass from a vanishing result for the additive  cohomology groups to a
vanishing result for the multiplicative cohomology groups in the absence of an exponential
isomorphism. Based on the latter vanishing the proof of Theorem~\ref{thm1} is given in
Section~\ref{sec.pic}.

In Section~\ref{sec.k0} we prove the following  stable version of Question~\ref{qu.main}.
Assume that $K$ is discretely valued, and hence its valuation ring is noetherian.
Let  $A^{\circ}$ denote the subring of power bounded elements in $A$.
By a regular model for a regular affinoid $K$-algebra $A$ we mean a proper morphism 
of schemes $\sX \to \Spec(A^{\circ})$ which is an isomorphism over $\Spec(A)$ and such that $\sX$ is regular.

\begin{thm}\label{thm2}
Let $K$ be discretely valued, 
and let $A/K$ be a regular affinoid algebra. Assume that $A$ admits a regular model; this is automatic if the residue field of $K$ has characteristic zero.
Then
\[
K_0(A) \to \prolim{t\mapsto \pi t}  K_0(A\langle t \rangle)
\]
is a pro-isomorphism.
\end{thm}

The proof of Theorem~\ref{thm2} uses ``pro-cdh-descent'' \cite{KST,Mo} for
the $K$-theory spectrum of schemes and resolution of singularities in the residue characteristic zero case;
so it is rather non-elementary. Of course, in the cases where Theorem~\ref{thm2} applies it comprises Theorem~\ref{thm1}, as there is a
surjective determinant map $\det: K_0 \to \Pic$.

\bigskip

{\it Acknowledgement.}\\
We would like to thank M.\ van der Put for helpful comments.

\subsection*{Notations}
We denote the supremum seminorm \cite[Sec.~3.1]{B} of a rigid analytic function $f$ on an affinoid space $X$ by $|f|_{\sup}$.
For a real number $r>0$ we denote by $\sO_{X}(r) \subseteq \sO_{X}$ the subsheaf of functions of supremum seminorm $<r$.   
We often omit the subscript $X$ if no confusion is possible. We write $\sO^{\circ} \subseteq \sO$ for the subsheaf of functions of supremum norm $\leq 1$. 

If $0<r<1$, functions of the from $1+f$ with $|f|_{\sup} <r$ are invertible, and we denote by $\sO^{*}(r) \subseteq \sO^{*}$ the subsheaf of invertible functions of this form.

We use similar notations $K(r), K^{\circ}, K^{*}(r)$ for corresponding elements of the field $K$ or complete valued extensions of $K$.

If $a$ is an analytic point of an affinoid space \cite[Sec.~2.1]{dJvdP}, we denote the completion of its residue field by $F_{a}$. 

For the closed polydisk $\Sp(K\langle t_{1},\dots, t_{d}\rangle)$ of radius 1 and dimension $d$ over $K$ we use the notation $\B^{d}_{K}$ or simply $\B^{d}$.

An affinoid algebra $A/K$ is called smooth if $A\otimes_K  K'$ is regular for all
finite field extensions $K\subset K'$. As a general reference concerning the terminology
of rigid spaces we refer to \cite{B}.

\section{Vanishing of additive cohomology (after Bartenwerfer)}\label{sec.vanad}

The aim of this section is to give new, more conceptual proofs of the main results of
\cite{B1} and~\cite{B2}. Our techniques are based on the cohomology theory for affinoid
spaces as developed by van der Put, see \cite{vdP} and \cite{dJvdP}.
Let $K$ be a field which is complete with respect to the non-archimedean absolute value
$|\cdot |:K\to\mathbb R$. We assume that  the absolute value $|\cdot |$ is not trivial.
All affinoid spaces we consider in this section are assumed to be integral.

Let $\mathcal M, \mathcal N$ be  sheaves of $\sO^\circ$-modules on the affinoid space $X=\Sp(A)$.
We say that $\mathcal M$ is weakly trivial if there exists $r\in (0,1)$ with
$\sO(r)\mathcal M=0$. Note that this just means that there exists $f\in K^\circ\setminus\{
0\}$ with $f \mathcal M=0$. The weakly trivial $\sO^{\circ}$-modules form a Serre subcategory 
of the abelian category of all sheaves of $\sO^{\circ}$-modules.
   We say that an  $\sO^\circ$-morphism $u:\mathcal M\to \mathcal N$ is
a weak isomorphism
if ${\rm coker}(u)$ and ${\rm ker}(u)$ are weakly
trivial. Note that the weak isomorphisms are exactly those morphisms which are invertible
up to multiplication by elements of
$K^\circ\setminus \{ 0\}$. We say that $\sM$ is weakly locally free ({wlf}) if there is a finite affinoid covering
$X=\cup_{i\in I} U_i$ and weak isomorphisms $(\sO^\circ_{U_i})^{n_i}\simeq \sM|_{U_i}$
for each $i\in I$.

Note that for $\sM$ wlf the $\sO_X$-module sheaf $\sM\otimes_{\sO^\circ_X} \sO_X$ is
coherent and locally free, i.e.~locally free of finite type.

\begin{lem}\label{lem.coker}
Let $\psi: \sM \to \sN$ be an $\sO^{\circ}$-morphism of wlf sheaves on $X=\Sp(A)$, and let $f\in A^{\circ}$. If    
\[
f \coker(\psi\otimes 1: \sM\otimes_{\sO^{\circ}} \sO \to \sN \otimes_{\sO^{\circ}} \sO) = 0,
\]
then there exists $r\in (0,1)$ such that $fK(r) \coker(\psi) =0$.
\end{lem}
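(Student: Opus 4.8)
The plan is to reduce to a statement about a single matrix over $A^\circ$ and then exploit the finiteness of the supremum seminorm.

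\emph{Step 1 (localization).} Since $\sM$ and $\sN$ are wlf, I would first pass to a common finite affinoid covering $X=\bigcup_{i\in I}U_i$ on which both become weakly free. Formation of cokernels commutes with the (exact) restriction to an affinoid subdomain, so both the hypothesis $f\coker(\psi\otimes 1)=0$ and the desired conclusion can be tested on the $U_i$. If on each $U_i$ I produce $r_i\in(0,1)$ with $fK(r_i)\coker(\psi)|_{U_i}=0$, then $r=\min_{i\in I}r_i\in(0,1)$ works globally, because a morphism of sheaves vanishes as soon as it vanishes on the members of a covering. This reduces to the case that $\sM$ and $\sN$ are weakly free on $X=\Sp(A)$.

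\emph{Step 2 (reduction to a matrix).} Choose weak isomorphisms $\alpha\colon(\sO^\circ)^m\to\sM$ and $\beta\colon(\sO^\circ)^n\to\sN$. By the remark preceding the lemma, $\beta$ is invertible up to a scalar: there are $\beta'\colon\sN\to(\sO^\circ)^n$ and $b\in K^\circ\setminus\{0\}$ with $\beta\beta'=b\cdot\id_\sN$. Set $\Phi=\beta'\circ\psi\circ\alpha\colon(\sO^\circ)^m\to(\sO^\circ)^n$, a homomorphism given by a matrix with entries in $A^\circ$. After tensoring with $\sO$ a weak isomorphism becomes an isomorphism (the scalar killing its kernel and cokernel is a nonzero element of $K$, hence a unit in $\sO$), so $\alpha\otimes 1$ and $\beta\otimes 1$ are isomorphisms and $\Phi\otimes 1$ differs from $\psi\otimes 1$ only by pre- and postcomposition with $\sO$-module isomorphisms. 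Consequently $\coker(\Phi\otimes 1)$ and $\coker(\psi\otimes 1)$ are $\sO$-linearly isomorphic, and the hypothesis transfers to $f\coker(\Phi\otimes 1)=0$.

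\emph{Step 3 (the free case).} The sheaf $\coker(\Phi\otimes 1)$ is a coherent $\sO$-module, and by Kiehl's theorem the global-sections functor identifies coherent $\sO$-modules on $X$ with finite $A$-modules; hence $f\coker(\Phi\otimes 1)=0$ is equivalent to $fA^n\subseteq\Phi A^m$. Thus I can solve $\Phi v_j=fe_j$ with $v_j\in A^m$ for the standard basis $e_1,\dots,e_n$; setting $C=\max_j|v_j|_{\sup}<\infty$ and choosing $r\in(0,1)$ with $rC\le 1$, the scaled vectors $gv_j$ lie in $(A^\circ)^m$ whenever $|g|<r$ and satisfy $\Phi(gv_j)=fge_j$. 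Expanding an arbitrary local section of $(\sO^\circ)^n$ in the basis $e_j$ and multiplying by $fg$ then shows $fK(r)\coker(\Phi)=0$.

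\emph{Step 4 (transfer back and main obstacle).} Finally I would move the relation from $\Phi$ back to $\psi$ using $\beta$ and $\beta'$: for a local section $s$ of $\sN$ and $|g|<r$ there is $w\in(\sO^\circ)^m$ with $fg\,\beta'(s)=\beta'(\psi\alpha(w))$, and applying $\beta$ with $\beta\beta'=b\cdot\id_\sN$ yields $bfg\,s=\psi(b\alpha(w))\in\psi(\sM)$. Since $\{bg\mid g\in K,\ |g|<r\}=K(|b|r)$, this gives $fK(|b|r)\coker(\psi)=0$ with $|b|r\in(0,1)$, as desired. I expect the heart of the proof to be Step 3: the point is to convert solvability over the unbounded sheaf $\sO$ into \emph{bounded} solvability over $\sO^\circ$ after rescaling, which relies on the finiteness of the supremum seminorm of elements of $A$ together with the fact that global sections compute coherent sheaves on an affinoid. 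The remaining scalar bookkeeping (the loss of the factor $|b|$) is routine.
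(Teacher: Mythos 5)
Your proposal is correct and follows essentially the same route as the paper: the paper compresses your Steps 1, 2 and 4 into a single ``without loss of generality $\sM=(\sO^\circ)^m$, $\sN=(\sO^\circ)^n$'', and its core argument is exactly your Step 3 --- use Tate/Kiehl acyclicity to solve $\Phi v_j = f e_j$ with $v_j\in A^m$, then choose $r\in(0,1)$ so that $K(r)v_j\subset (A^\circ)^m$. Your invocation of Kiehl's coherent-sheaf/finite-module correspondence in place of the paper's displayed $H^0$-exact sequence, and your explicit bookkeeping of the scalar $b$ from the weak inverse of $\beta$, are only cosmetic differences.
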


\begin{proof}
By the definition of weak local freeness, we may assume without
 loss of generality that $\sM= (\sO^\circ)^m$ and $\sN= (\sO^\circ)^n$. Let $\sC$ be the
cokernel of $\psi$. By Tate's acyclicity theorem \cite[Cor.~4.3.11]{B} we get an exact sequence
\[
H^0(X,\sM\otimes_{\sO^\circ} \sO)  \to H^0(X,\sN\otimes_{\sO^\circ} \sO) \to H^0(X,\sC\otimes_{\sO^\circ} \sO),
\]
where the right hand $A$-module is $f$-torsion by assumption. Let $e_1,\ldots, e_n\in \sN(X)$ be
the canonical basis elements. So we deduce that $fe_1,\ldots , fe_n$ have preimages $l_1,\ldots,
l_n\in H^0(X,\sM\otimes_{\sO^\circ} \sO)=A^m$. Choose $r\in (0,1)$ such that
$K(r)l_1, \ldots, K(r)l_n\subset (A^\circ)^m$.
\end{proof}

\begin{prop}\label{prop.wlf}
Let $\sM$ be an $\sO^\circ$-module sheaf on $X=\Sp(A)$ such that $\sM\otimes_{\sO^\circ_X} \sO_X$ is
coherent and
locally free as $\sO_X$-module sheaf. Then the following are equivalent:
\begin{itemize}
\item[(i)] $\sM$ is wlf.
\item[(ii)] For each finite set of points $R\subset X$ there is
an injective $\sO^\circ$-linear morphism $\Psi:(\sO^{\circ})^n\to \sM$ and $f\in \sO^\circ(X)$ with
$f(x)\ne 0$ for all $x\in R$ such that  $f\, {\rm coker} (\Psi)=0$.
\item[(iii)] For each point $x\in X$ there is
an injective $\sO^\circ(X)$-linear morphism $\Psi_x:(\sO^{\circ})^n\to \sM$ and $f_x\in \sO^\circ(X)$ with
$f_x(x)\ne 0$ such that $f_x\, {\rm coker} (\Psi)=0$.
\end{itemize}
\end{prop}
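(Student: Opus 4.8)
The plan is to prove the cycle of implications (ii) $\Rightarrow$ (iii) $\Rightarrow$ (i) $\Rightarrow$ (ii). The implication (ii) $\Rightarrow$ (iii) is immediate: apply (ii) to the one-point set $R=\{x\}$. Throughout I write $\sE := \sM\otimes_{\sO^\circ}\sO$, which by hypothesis is coherent and locally free; since $X$ is integral its rank is a constant $n$, and $\sE(X)$ is a finitely generated projective $A$-module of rank $n$.

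For (iii) $\Rightarrow$ (i), fix $x\in X$ together with the data $\Psi_x\colon (\sO^\circ)^n\to\sM$ and $f_x\in\sO^\circ(X)$ provided by (iii). Choose $\lambda\in K^\circ\setminus\{0\}$ with $0<|\lambda|\le |f_x(x)|$ and $|\lambda|<1$, and let $U_x=\{\,|f_x|\ge|\lambda|\,\}\subseteq X$ be the corresponding Laurent domain, an affinoid neighbourhood of $x$. On $U_x$ the function $f_x$ is invertible with $|f_x^{-1}|_{\sup}\le|\lambda|^{-1}$, so $\lambda f_x^{-1}\in\sO^\circ(U_x)$. Restriction to the open $U_x$ is exact, whence $\coker(\Psi_x|_{U_x})=\coker(\Psi_x)|_{U_x}$, and this is killed by $\lambda=(\lambda f_x^{-1})f_x$, hence weakly trivial. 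As $\Psi_x$ is injective, $\Psi_x|_{U_x}$ is a weak isomorphism $(\sO^\circ_{U_x})^n\simeq\sM|_{U_x}$. The $U_x$ cover $X$, and by quasi-compactness finitely many of them suffice; this exhibits $\sM$ as wlf.

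The substantial implication is (i) $\Rightarrow$ (ii). Given a finite set $R\subset X$, the global sections of the coherent sheaf $\sE$ surject onto the product $\bigoplus_{x\in R}\sE\otimes k(x)$ of fibres (Chinese remainder theorem together with finite generation of $\sE(X)$), so I may pick $e_1,\dots,e_n\in\sE(X)$ restricting to a basis of each fibre over $R$. The key point is to lift these, up to a uniform scalar, into $\sM$: using the wlf covering of (i) together with Tate's acyclicity theorem \cite[Cor.~4.3.11]{B}, one shows that the natural map $\Gamma(X,\sM)\to\Gamma(X,\sE)=\sE(X)$ has cokernel annihilated by some $\lambda\in K^\circ\setminus\{0\}$; thus there are $s_1,\dots,s_n\in\sM(X)$ mapping to $\lambda e_1,\dots,\lambda e_n$. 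Let $\Psi\colon(\sO^\circ)^n\to\sM$ send the standard basis to $s_1,\dots,s_n$. Then $\Psi\otimes 1$ sends the standard basis to $\lambda e_1,\dots,\lambda e_n$, which are $\sO$-linearly independent (being a fibre basis at a point of the integral space $X$) and span $\sE$ in a neighbourhood of $R$; hence $\Psi\otimes 1$ is injective, so $\Psi$ is injective because $(\sO^\circ)^n$ is torsion-free, and $\coker(\Psi\otimes 1)$ is a coherent sheaf supported away from $R$. By prime avoidance its annihilator contains some $g\in A^\circ$ with $g(x)\ne 0$ for every $x\in R$.

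To finish I invoke Lemma~\ref{lem.coker}: from $g\,\coker(\Psi\otimes 1)=0$ it produces $r\in(0,1)$ with $gK(r)\,\coker(\Psi)=0$. Choosing any $\mu\in K(r)\setminus\{0\}$ and setting $f=\mu g$, one has $f\in\sO^\circ(X)$, $f\,\coker(\Psi)=0$, and $f(x)=\mu g(x)\ne 0$ for all $x\in R$, which is exactly (ii). I expect the main obstacle to be the lifting step in the previous paragraph, i.e.\ showing that $\Gamma(X,\sM)\to\Gamma(X,\sE)$ is surjective up to a fixed element of $K^\circ$: this is where weak local freeness must be combined with the acyclicity of $\sE$ on the affinoid covering, and it is the only place where hypothesis (i) really enters.
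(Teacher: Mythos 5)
Your cycle (ii) $\Rightarrow$ (iii) $\Rightarrow$ (i) $\Rightarrow$ (ii) matches the paper's structure, and the endgame of (i) $\Rightarrow$ (ii) (prime avoidance for $g$, injectivity of $\Psi$ via $\Psi\otimes 1$, Lemma~\ref{lem.coker} plus a scalar in $K(r)$) is exactly the paper's. But your (iii) $\Rightarrow$ (i) contains a genuine error: the appeal to ``quasi-compactness'' to extract finitely many of the $U_{x}$. An affinoid space is \emph{not} quasi-compact with respect to arbitrary set-theoretic coverings by affinoid subdomains; such coverings need not be admissible and need not admit finite subcoverings (e.g.\ $\B^{1}$ is covered by the subdomains $\{|t|\le r\}$, $r<1$, together with $\{|t|=1\}$, with no finite subcover). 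Since your radius $|\lambda_{x}|\le |f_{x}(x)|$ varies with $x$, nothing prevents the following failure mode: for any finite subfamily $U_{x_{1}},\dots,U_{x_{k}}$ there may be points $y$ with all $f_{x_{i}}(y)\ne 0$ but $|f_{x_{i}}(y)|<|\lambda_{x_{i}}|$ for every $i$. The paper avoids this in two steps: first it uses quasi-compactness of the \emph{Zariski} topology on $\Sp(A)$ (the sets $\{f_{x}\ne 0\}$ do form a Zariski covering) to select finitely many points $x_{1},\dots,x_{k}$, and then it invokes \cite[Lem.~5.1.8]{B} to produce one uniform $\epsilon\in\sqrt{|K^{\times}|}$ such that the Laurent domains $\{|f_{x_{i}}|\ge\epsilon\}$ already cover $X$; on those domains your scaling trick with $\lambda f_{x_{i}}^{-1}\in\sO^{\circ}$ goes through verbatim. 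So the step is repairable, but the uniformization lemma is precisely the missing ingredient, and ``quasi-compactness'' alone is false.

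In (i) $\Rightarrow$ (ii) you have correctly isolated the crux --- lifting sections of $\sE=\sM\otimes_{\sO^{\circ}}\sO$ into $\sM$ up to a scalar --- but you only assert it (``one shows that\dots''), so your proof is incomplete exactly there; to be fair, the paper is equally terse, disposing of this point with an unjustified ``without loss of generality'' when it assumes the basis $b_{1},\dots,b_{n}$ of $M\otimes_{A}A_{R}$ comes from $\sM(X)$. Two remarks. First, your formulation is stronger than what is true or needed: the cokernel of $\sM(X)\to\sE(X)$ is in general only elementwise $K^{\circ}$-torsion (it is not finitely generated over $A^{\circ}$), and the argument only requires lifting the finitely many sections $e_{1},\dots,e_{n}$ after one common rescaling --- which is harmless, since scaling by nonzero elements of $K$ preserves the property of inducing a basis of $M\otimes_{A}A_{R}$. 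Second, the standard justification is short and you should supply it: sectionwise $\sO=\sO^{\circ}\otimes_{K^{\circ}}K$, so $\sE\cong\operatorname{colim}\bigl(\sM\xrightarrow{\pi^{-1}}\sM\xrightarrow{\pi^{-1}}\cdots\bigr)$ is a filtered colimit of sheaves, and on the quasi-compact $X$ (finite coverings by affinoid subdomains are cofinal among admissible coverings) global sections commute with filtered colimits, whence every $e\in\sE(X)$ satisfies $\lambda e\in\operatorname{im}(\sM(X))$ for some $\lambda\in K^{\circ}\setminus\{0\}$; alternatively one can \v{C}ech-glue local lifts over the finite wlf covering after a uniform rescaling killing the kernels and cokernels of the finitely many weak isomorphisms. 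With these two repairs your argument coincides with the paper's proof.
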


\begin{proof}
Clearly, (ii) implies (iii). We first prove (iii) implies (i). Choose for each point $x\in
X$ a map $\Psi_x$ and $f_x$ as in (iii). There is a finite set of points $x_1,\ldots,
x_k\in X$ such that we get a Zariski covering 
\[
X=\bigcup_{i\in\{1,\ldots , k\}} \{x\in X\  | \ f_{x_i}(x)\ne 0 \}.
\]
By \cite[Lem.~5.1.8]{B} there exists $\epsilon \in \sqrt{|K^\times|}$ such that the $U_i=\{
  x\in X\ |\ |f_{x_i}(x)| \ge \epsilon \}$ cover $X$. Then the morphisms
$\Psi_{x_i}|_{U_i}$ are weak isomorphisms, so $\sM$ is wlf.

\smallskip

We now prove that (i) implies (ii). As  $\sM\otimes_{\sO^\circ_X} \sO_X$ is locally free, there
exists a finitely generated projective $A$-module $M$ with $M^\sim =  \sM\otimes_{\sO^\circ_X}
\sO_X$, \cite[Sec.~6.1]{B}. By $A_R$ we denote the semi-local ring which is the localization of $A$ at
the finitely many maximal ideals $R$. Choose a basis $b_1,\ldots , b_n$ of the free
$A_R$-module $M\otimes_A A_R$.
Without loss of
generality we can assume $b_1,\ldots , b_n$ are induced by elements of $ \sM(X)$. We claim
that  the latter elements give rise to  a
morphism $\Psi$ as in (ii). Indeed, by elementary algebra we find $f'\in A^\circ$ such that $f'(x)\not=0$ for all
$x\in R$ and such that 
\[
f'\, {\rm coker}( A^n\to M )=0.
\]
We conclude by Lemma~\ref{lem.coker}.
\end{proof}

\begin{prop}\label{prop.etale}
Let $\phi:X\to Y$ be a finite \'etale morphism of affinoid spaces over $K$ and let $\sM$ be a
wlf $\sO^\circ_X$-module. Then $\phi_* \sM$ is a wlf $\sO_Y^\circ$-module. 
\end{prop}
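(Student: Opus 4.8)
The plan is to verify the pointwise criterion (iii) of Proposition~\ref{prop.wlf} for the $\sO^\circ_Y$-module $\phi_*\sM$. For this I must first check the standing hypothesis of that proposition, namely that $\phi_*\sM\otimes_{\sO^\circ_Y}\sO_Y$ is coherent and locally free. Writing $\sO_Y=\sO^\circ_Y\otimes_{K^\circ}K$, the inclusion $\sO^\circ_Y\to\sO_Y$ is flat, so since $\phi$ is finite there is a natural base-change isomorphism $\phi_*\sM\otimes_{\sO^\circ_Y}\sO_Y\cong\phi_*(\sM\otimes_{\sO^\circ_X}\sO_X)$ (both sides just invert $\pi$). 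As $\sM$ is wlf, $\sM\otimes_{\sO^\circ_X}\sO_X$ is coherent and locally free, and as $\phi$ is finite \'etale its pushforward is again coherent and locally free on $Y$. This disposes of the hypothesis.

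Now fix a point $y\in Y$. Since $\phi$ is finite, the fibre $R=\phi^{-1}(y)=\{x_1,\dots,x_r\}\subset X$ is finite, and I apply Proposition~\ref{prop.wlf}(ii) to this set: there is an injective $\sO^\circ_X$-linear map $\Psi\colon(\sO^\circ_X)^n\hookrightarrow\sM$ and $g\in\sO^\circ(X)$ with $g(x_i)\ne0$ for all $i$ and $g\,\coker(\Psi)=0$. Applying $\phi_*$, which is left exact, yields an injection $\phi_*\Psi\colon(\phi_*\sO^\circ_X)^n\hookrightarrow\phi_*\sM$ whose cokernel embeds into $\phi_*\coker(\Psi)$. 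The key idea is to transfer the killing function $g$ from $X$ to $Y$ by means of the norm $N=N_{X/Y}$ of the finite locally free extension $\sO(X)/\sO(Y)$. Indeed $N(g)\in\sO^\circ(Y)$, because norms of power-bounded functions are power-bounded; $N(g)(y)=\prod_i N_{F_{x_i}/F_y}(g(x_i))\ne0$ since each $g(x_i)\ne0$; and $g$ divides $N(g)$ in $\sO(X)$ by the Cayley--Hamilton theorem applied to multiplication by $g$. Hence $N(g)$ annihilates $\coker(\Psi)$, so it annihilates $\phi_*\coker(\Psi)$ and therefore $\coker(\phi_*\Psi)$.

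It remains to replace the source $(\phi_*\sO^\circ_X)^n$, which is not $\sO^\circ_Y$-free, by a free module near $y$. Localizing at $y$ I choose $s_1,\dots,s_d\in\sO^\circ(X)$ forming an $\sO(Y)$-basis of $\sO(X)$; they define an $\sO^\circ_Y$-linear map $\beta\colon(\sO^\circ_Y)^d\to\phi_*\sO^\circ_X$. Because $\sO(X)/\sO(Y)$ is \'etale, the discriminant $\mathrm{disc}(s_1,\dots,s_d)=\det(\trace(s_is_j))\in\sO^\circ(Y)$ is nonzero at $y$ and annihilates $\coker(\beta\otimes_{\sO^\circ_Y}\sO_Y)$; invoking Lemma~\ref{lem.coker} I obtain $r\in(0,1)$ such that $\mathrm{disc}\cdot K(r)$ kills $\coker(\beta)$, and $\beta$ is injective because its target is torsion-free over the integral base. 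Composing, $\Psi_y=(\phi_*\Psi)\circ\beta^{\oplus n}\colon(\sO^\circ_Y)^{dn}\to\phi_*\sM$ is injective, and its cokernel is an extension of $\coker(\phi_*\Psi)$ by a quotient of $\coker(\beta^{\oplus n})$, hence annihilated by a product $f_y=N(g)\cdot\mathrm{disc}\cdot\lambda$ (with $\lambda\in K(r)$) which is nonzero at $y$. This verifies Proposition~\ref{prop.wlf}(iii) for $\phi_*\sM$, so $\phi_*\sM$ is wlf.

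The main obstacle is precisely the transfer of data from $X$ to $Y$: a weak trivialization of $\sM$ lives over a covering of $X$, which need not descend to a covering of $Y$, and the function controlling the cokernel naturally lives on $X$. Both difficulties are resolved by working over the individual finite fibres $\phi^{-1}(y)$ and pushing the relevant functions down, via the norm for the killing function $g$ and via the discriminant for the choice of local basis, using crucially that $\phi$ is finite \'etale so that the discriminant is invertible at $y$. A secondary technical point to confirm is that the proof of Lemma~\ref{lem.coker} still applies with $\sN=\phi_*\sO^\circ_X$ in place of a free sheaf, which is the case once the coherence of $\phi_*\sO_X$ established in the first step is in hand.
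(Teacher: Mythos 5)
Your overall strategy coincides with the paper's: verify the pointwise criterion of Proposition~\ref{prop.wlf} for $\phi_*\sM$, push forward a weak trivialization of $\sM$ obtained from the finite fibre $\phi^{-1}(y)$, and control $\phi_*\sO^\circ_X$ by a local basis $s_1,\dots,s_d$ together with its discriminant. Your descent of the annihilator via the norm is a correct variant of the paper's argument, which instead finds an element $g\in (\phi^\sharp)^{-1}(Af)\cap B^\circ$ with $g\notin M$ using \cite[Sec.~V.2.1]{BCA}; the facts you use --- that $N(g)$ is power-bounded, that $N(g)(y)\neq 0$, and that $g$ divides $N(g)$ by Cayley--Hamilton --- are all valid for a finite \'etale map of integral affinoids, and this part of your proof is fine.

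The gap is the step you dismiss as a secondary technical point: annihilating $\coker(\beta)$ by invoking Lemma~\ref{lem.coker} with $\sN=\phi_*\sO^\circ_X$. The lemma requires \emph{both} sheaves to be wlf, and wlf-ness of $\phi_*\sO^\circ_X$ is precisely what is being proved, so you cannot cite it; nor does its proof extend merely ``once the coherence of $\phi_*\sO_X$ is in hand''. That proof lifts $f$ times a finite global basis of $\sN$ and then expresses an arbitrary section of $\sN$ over any subdomain $U$ as an $\sO^\circ(U)$-linear combination of the restricted global basis. For $\sN=\phi_*\sO^\circ_X$ the sections over $U$ are the power-bounded functions on $\phi^{-1}(U)$, and there is no a priori reason these should be $\sO^\circ_Y(U)$-combinations of finitely many global power-bounded functions on $X$: that assertion is essentially the weak local freeness of $\phi_*\sO^\circ_X$ itself, so your patch is circular. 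The correct repair is what the paper does: identify $\sO^\circ_X(\phi^{-1}(V))$ with the integral closure of $\sO^\circ_Y(V)$ in $\sO_X(\phi^{-1}(V))$ (\cite[Thm.~3.1.17]{B}) and apply the discriminant bound \cite[Lem.~V.1.6.3]{BCA}. Concretely, for $s\in\sO^\circ_X(\phi^{-1}(U))$ write $\delta s=\sum_i\mu_i s_i$ with the $\mu_i$ given by Cramer's rule from the traces $\trace(s s_j)$, and observe that traces of power-bounded functions are power-bounded (the same sup-norm argument you used for the norm $N(g)$), so $\mu_i\in\sO^\circ_Y(U)$. This yields $\delta\,\coker(\beta)=0$ outright --- no factor $K(r)$ and no appeal to Lemma~\ref{lem.coker} is needed --- after which your composition $\Psi_y=(\phi_*\Psi)\circ\beta^{\oplus n}$ with annihilator $N(g)\cdot\delta$ verifies criterion (iii) exactly as you intend.
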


\begin{proof}
Let $X=\Sp(A)$ and $Y=\Sp(B)$.
The $\sO_Y$-module sheaf  $\phi_* (\sM) \otimes_{\sO_Y^\circ} \sO_Y= \phi_* (\sM
\otimes_{\sO_X^\circ} \sO_X)$ is coherent and locally free. For $y\in Y$ let $R$ be the
finite set $\phi^{-1}(y)$ and let $M\subset B$ be the maximal ideal corresponding to $y$. From Proposition~\ref{prop.wlf} we deduce that there is an
injective $\sO^\circ_X$-linear morphism 
\[
\Psi:(\sO^\circ_X)^n \to \sM
\]
whose cokernel is killed by some $f\in A^\circ$ which does not vanish on $R$. Then as
the induced homomorphism
$\phi^\sharp:B\to A$ is finite 
the prime ideals of $B$ containing the ideal $I= (\phi^\sharp)^{-1}( A f)$ are
exactly the preimages of the prime ideals in $A$ which contain $f$, see
\cite[Sec.~V.2.1]{BCA}. So we can find $g\in I\cap B^\circ$ which is not contained in $M$. Then the cokernel of the injective morphism
\[
\phi_*(\Psi):\phi_*( \sO^\circ_X)^n \to \phi_*(\sM).
\]
is $g$-torsion.
By  Proposition~\ref{prop.wlf} we see that  it suffices to show that $\phi_*(\sO_X^\circ)$ is
wlf.

Note that for $V\subset Y$ an affinoid subdomain $\sO_X^\circ(\phi^{-1}(V))$ is the
integral closure of $\sO_Y^\circ(V)$ in $A\otimes_{B} \sO_Y(V)=\sO_X(\phi^{-1}(V))$
\cite[Thm.~3.1.17]{B}. As
the field extension $Q(B)\to Q(A)$ is separable, it is not hard to bound this integral
closure as follows.
Let $b_1,\ldots , b_d\in \sO^\circ(X)$ induce a basis of the free $B_M$-module $A\otimes_B
B_M$. This basis induces an injective $\sO_Y^\circ$-linear morphism
\[
\Psi:(\sO_Y^\circ)^d \to \phi_*(\sO^\circ_X).
\]
Let $\delta$ be the discriminant of $b_1,\ldots , b_d$.
Then by \cite[Lem.~V.1.6.3]{BCA} the cokernel of $\Psi$ is $\delta$-torsion.

As the point $y\in Y$  was arbitrary we conclude from Proposition~\ref{prop.wlf} that  $\phi_*(\sO_X^\circ)$ is
wlf.
\end{proof}

In the proofs of Theorems~\ref{thm.van1} and \ref{thm.van2} below, we want to apply a base change theorem of van der Put (\cite[Thm.~2.7.4]{dJvdP}) and argue with stalks.
The latter work well if one restricts to overconvergent sheaves and analytic points, see \cite[Sec.~2]{dJvdP} for the definition and basic properties.
For a sheaf $\sM$ on $X$ we write $\sM^{\rm oc}$ for the associated overconvergent sheaf.
The sheaf  $\sM^{\rm oc}$ is given on an affinoid open subdomain $U\subset X$ by 
\[
\sM^{\rm oc}(U)= \operatorname{colim}_{U\subset  U'} \sM(U')
\]
where $U'$ runs through all wide neighborhoods of $U$ in $X$ (see \cite[Sec.~2.3]{dJvdP} for a definition). Note that there is a
canonical morphism $\sM^{\rm oc}\to \sM$. 

\begin{rmk}
Let $X = \Sp(A)$ be an affinoid rigid space over $K$, and let $X^{\mathrm{an}}$ be the Berkovich spectrum of $A$. The  analytic points of $X$ are in canonical bijection with the points of the topological space $X^{\mathrm{an}}$, and there is a morphism of topoi $(\sigma_{*}, \sigma^{*}) : X^{\sim} \to X^{\mathrm{an},\sim}$. The left adjoint $\sigma^{*}$ identifies $X^{\mathrm{an},\sim}$ with the full subcategory of $X^{\sim}$ consisting of overconvergent sheaves, and for any sheaf $\sM$ on $X$ the counit $\sigma^{*}\sigma_{*}\sM \to \sM$ is identified with the canonical map $\sM^{\rm oc}\to \sM$. The stalk of $\sigma_{*}\sM$ in a point of $X^{\mathrm{an}}$ is precisely the stalk of $\sM$ in the corresponding analytic point. Finally, for an overconvergent abelian sheaf $\sM$ on $X$ one has a natural isomorphism $H^{*}(X, \sM) \simeq H^{*}(X^{\mathrm{an}}, \sigma_{*}\sM)$ and similarly for higher direct images. Using this, van der Put's base change theorem for overconvergent sheaves can be deduced from the ordinary proper base change theorem in topology. See \cite{PutSch,Sch} for all this.
\end{rmk}

The following proposition is a simple
consequence of Tate's acyclicity theorem \cite[Cor.~4.3.11]{B}.

\begin{prop}\label{prop.fromtate}
Let $X=\Sp(A)$ be an affinoid space.
\begin{itemize}
\item[(i)]
For any finite affinoid covering $\mathcal U$ of $X$ the \v{C}ech cohomology groups $H^i(\sU,\sO^\circ)$ are weakly trivial (as $K^\circ$-modules)
 for all $i>0$.
\item [(ii)]
The canonical map 
\[
H^i(V,\sO_X(r)^{\rm oc}|_V)\to H^i(V,\sO_V(r))
\]
is surjective for every affinoid subdomain $V\subset X$, every  $r>0$ and integer $i > 0$.
\end{itemize}
\end{prop}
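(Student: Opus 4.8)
The plan is to prove both statements via Tate's acyclicity theorem, treating part (i) as essentially the statement of the theorem with integral coefficients, and part (ii) as a surjectivity statement that follows from combining overconvergence with the acyclicity.

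\medskip

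For part (i), I would start from Tate's acyclicity theorem in its usual form, which asserts that for the coherent sheaf $\sO_X$ (with full-valued coefficients), the \v{C}ech cohomology $H^i(\sU, \sO_X)$ vanishes for $i>0$ for any finite affinoid covering $\sU$. The integral sheaf $\sO^\circ_X$ sits inside $\sO_X$ as the subsheaf of functions of norm $\le 1$, and the quotient $\sO_X/\sO^\circ_X$ is torsion as a $K^\circ$-module in an appropriate sense. The key point is that the \v{C}ech complex computing $H^i(\sU, \sO^\circ)$ differs from the one computing $H^i(\sU,\sO)$ only by bounded denominators: the \v{C}ech differentials are the same maps, and the integral cochains are those of norm $\le 1$. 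Since the full-coefficient cohomology vanishes, every integral cocycle is a coboundary of some (not necessarily integral) cochain; clearing denominators by multiplying with a suitable $f\in K^\circ\setminus\{0\}$ shows that $f\cdot H^i(\sU,\sO^\circ)=0$ for $i>0$. This is exactly weak triviality as defined in the excerpt, so part (i) follows. The only care needed is that a single $f$ works for all classes, which holds because the \v{C}ech complex is built from finitely many affinoid algebras and the norms of the bounding cochains are uniformly controlled.

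\medskip

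For part (ii), I would pass to the colimit defining the overconvergent sheaf. By definition $\sO_X(r)^{\rm oc}(V)=\colim_{V\subset U'}\sO_X(r)(U')$, where $U'$ ranges over wide neighborhoods of $V$. The induced map on cohomology $H^i(V,\sO_X(r)^{\rm oc}|_V)\to H^i(V,\sO_V(r))$ should be analyzed by choosing a cofinal system of affinoid coverings of $V$ that extend to coverings of suitable wide neighborhoods. Given a \v{C}ech class on $V$ with coefficients in $\sO_V(r)$, represented on some finite affinoid covering $\sU$ of $V$, I would lift each cochain to a slightly larger wide neighborhood; the content is that sections of supremum norm $<r$ on $V$ extend, after shrinking $r$ infinitesimally or passing to a wide neighborhood, to sections of the overconvergent sheaf. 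The surjectivity then reduces to the statement that every \v{C}ech cocycle for $\sO_V(r)$ can be represented by the image of an overconvergent cocycle, which follows because the restriction maps $\sO_X(r)(U')\to \sO_V(r)$ are dense in the appropriate sense and the covering can be chosen to be induced from one on a wide neighborhood.

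\medskip

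The main obstacle I anticipate is in part (ii): controlling the interplay between the overconvergence (colimit over wide neighborhoods) and the norm bound ($<r$) simultaneously. One must ensure that when lifting a cocycle from $V$ to a wide neighborhood $U'$, the lifted sections still satisfy the strict norm bound $<r$, or can be arranged to do so after adjusting the neighborhood, so that they define genuine sections of $\sO_X(r)^{\rm oc}$ rather than just of $\sO^{\rm oc}$. This is where Tate's acyclicity must be invoked uniformly across the cofinal system of neighborhoods, and where the strictness of the inequality (as opposed to $\le r$) is used to gain the wiggle room needed for extension. I expect that choosing the coverings compatibly and using the maximum modulus principle to keep norms under control on the slightly enlarged domains will resolve this, with part (i) providing the vanishing that makes the cocycles into coboundaries.
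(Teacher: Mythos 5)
Your proof of (i) has the right skeleton but the step you dismiss as ``the only care needed'' is in fact the entire mathematical content, and your justification for it is not valid. Clearing denominators for a fixed integral cocycle gives an $f$ depending on that cocycle; weak triviality requires a single $f$ killing the whole group, i.e.\ a \emph{uniform} bound on the norm of a bounding cochain in terms of the norm of the cocycle. This does not follow from the covering being finite: the groups $C^{i-1}(\sU,\sO)$ and $Z^i(\sU,\sO)$ are infinite-dimensional $K$-Banach spaces, and a continuous surjection between such spaces does not a priori admit any norm control on preimages. The missing ingredient, and the key tool in the paper's proof, is the open mapping theorem: $d^{i-1}\colon C^{i-1}(\sU,\sO)\to Z^i(\sU,\sO)$ is a continuous surjection of $K$-Banach spaces (surjectivity being Tate's acyclicity), hence open, so $d^{i-1}(C^{i-1}(\sU,\sO^\circ))$ contains $Z^i(\sU,\sO(r))$ for some $r\in(0,1)$, and therefore $H^i(\sU,\sO^\circ)$ is $K(r)$-torsion. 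Without invoking this (or an equivalent openness statement) your argument for (i) is incomplete at exactly the decisive point.

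Your plan for (ii) rests on an extension claim that is false: a cochain with coefficients in $\sO_V(r)$ does \emph{not} in general extend to any wide neighborhood of $V$. The restriction maps $\sO_X(U')\to \sO(V)$ have dense image when $V$ is Weierstrass in $U'$, but they are far from surjective --- this failure is precisely why the overconvergent sheaf differs from the ordinary one --- so ``lift each cochain to a slightly larger wide neighborhood'' cannot be carried out, and density alone does not let you represent a given cocycle \emph{exactly} by an overconvergent one. The correct mechanism, as in the paper, combines two statements: (a) the image of $Z^i(\sU,\sO_X(r)^{\rm oc})\to Z^i(\sU,\sO(r))$ is dense, proved by writing the given cocycle $\xi$ as $d\xi'$ with $\xi'\in C^{i-1}(\sU,\sO)$ via Tate's acyclicity, approximating $\xi'$ by overconvergent cochains $\xi'_j$ (using that each rational domain of the covering is Weierstrass in a slightly larger one), and checking that $d\xi'_j$ lies in $Z^i(\sU,\sO_X(r)^{\rm oc})$ for large $j$; and (b) the image of $d^{i-1}\colon C^{i-1}(\sU,\sO(r))\to Z^i(\sU,\sO(r))$ is \emph{open}, which is a consequence of part (i). Given (a) and (b), any sufficiently close overconvergent approximant of $\xi$ differs from $\xi$ by a coboundary with coefficients in $\sO(r)$ and hence represents the same class, which is the desired surjectivity. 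You do gesture at both ingredients in your final paragraph, but the argument as structured hinges on exact lifting, which fails; note also that you implicitly use, without justification, that sheaf cohomology of these non-coherent sheaves on $V$ is computed by \v{C}ech cohomology over finite coverings by rational subdomains.
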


\begin{proof}
 (i):
Note that for each affinoid open subdomain $U $ of $X$ the 
 \v Cech complex $(C(\sU, \sO),d)$ consists of complete normed $K$-vector spaces and
the differential is continuous. To be concrete, we work with the supremum norm. The continuous morphism
\[
d^{i-1}:C^{i-1}(\sU,\sO)\to Z^i(\sU,\sO)
\]
is surjective by \cite[Cor.~4.3.11]{B}, so it is open according to
\cite[Thm.~I.3.3.1]{BTVS}. In other words there exists $r\in (0,1)$ such that
$Z^i(\sU,\sO(r))$ is contained in $d^{i-1}(C^{i-1}(\sU,\sO^\circ))$. This means that $H^i(\sU,\sO^\circ)$ is $K(r)$-torsion.

\medskip

 (ii): 
In order to show part (ii) of the proposition it suffices to show that for each finite
 covering $\sU=(U_l)_{l\in L}$ of $V$ by rational subdomains of $X$  the map
\begin{equation}\label{eq.oc}
H^i(\sU,\sO_X(r)^{\rm oc})\to H^i(\sU,\sO(r))
\end{equation}
is surjective. This is a consequence of
\begin{claim}\label{claim.oc}
\mbox{}
\begin{itemize}
\item[(i)] For $i>0$ the image of $d^{i-1}:C^{i-1}(\sU,\sO(r))\to Z^i(\sU,\sO(r))$ is  open.
\item[(ii)] The image of $Z^i(\sU, \sO_X(r)^{\rm oc}) \to Z^i(\sU, \sO(r)) $ is dense.
\end{itemize}
\end{claim}
Part (i) of the claim is a consequence of Proposition~\ref{prop.fromtate}(i). For part
(ii) of the claim note that
 for each rational subdomain 
\[
U= \{|g_1|\le |g_0|, \ldots ,
|g_r|\le |g_0|\}
\]
 of $X$ the image of $\sO_X^{\rm
  oc}(U)\to \sO(U)$ is dense. To see this observe that for $\epsilon >1$ and $\epsilon \in
|K^*|^{\mathbb Q}$ the set $U$ is a Weierstra\ss\ domain inside 
$\{|g_1|\le \epsilon |g_0|, \ldots ,
|g_r|\le \epsilon |g_0|\}$.

For $\xi \in Z^i(\sU, \sO(r))$ we find $\xi'\in C^{i-1}(\sU,\sO)$ with $d(\xi')=\xi$, using
again \cite[Cor.~4.3.11]{B}. Find a sequence $\xi'_j\in C^{i-1}(\sU,\sO_X^{\rm oc})$ such
that its image in $C^{i-1}(\sU,\sO)$ converges to $\xi'$. Then $d(\xi'_j)\in
Z^i(\sU,\sO^{\rm oc})$ is a sequence approximating $\xi$. By \cite[Lem.~2.3.1]{dJvdP} for
large $j$ we have $d(\xi'_j)\in
Z^i(\sU,\sO_X(r)^{\rm oc})$.
\end{proof}

\begin{thm}[Bartenwerfer/van der Put]\label{thm.van1}
We have 
\[
H^i(\mathbb B^d,\sO(r))=0
\] 
for all $r>0$ and integers $i>0$.
\end{thm}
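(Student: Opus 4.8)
The plan is to reduce the statement to an explicit, norm-controlled solution of the \v{C}ech coboundary equation on $\mathbb B^d$ and to extract such a solution from Tate's acyclicity theorem together with Proposition~\ref{prop.fromtate}. Since $\mathbb B^d$ is affinoid, I would compute $H^i(\mathbb B^d,\sO(r))$ via \v{C}ech cohomology $H^i(\mathcal U,\sO(r))$ with respect to a cofinal system of finite coverings $\mathcal U$ by rational subdomains, as in van der Put's theory; by Proposition~\ref{prop.fromtate}(ii) it then suffices to control the overconvergent \v{C}ech groups and to feed this into the density-and-openness mechanism already used in the proof of that proposition. The task thereby becomes: given a cocycle $\xi\in Z^i(\mathcal U,\sO(r))$, that is, with $|\xi|_{\sup}<r$, produce a cochain $\eta\in C^{i-1}(\mathcal U,\sO(r))$ with $d\eta=\xi$ and again $|\eta|_{\sup}<r$.

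The heart of the argument is that on the polydisk this equation can be solved \emph{without loss of norm}. I would fix a cofinal family of Laurent coverings adapted to the coordinates $t_1,\dots,t_d$ and argue by induction on $d$, the case $d=0$ being trivial. In one variable the overlaps are annuli, on which a function expands as a Laurent series $\sum_{n\in\Z}c_nt^n$ with Gauss norm $\max_n|c_nt^n|_{\sup}$; splitting off the part with $n\ge 0$, which extends over the inner disk, from the part with $n<0$, which extends over the outer region, is an isometric decomposition for the supremum norm. Tate's acyclicity guarantees surjectivity of the $\sO$-coboundary, and this explicit splitting upgrades it to a contracting homotopy with $|\eta|_{\sup}\le|\xi|_{\sup}$; in several variables the Gauss norms multiply, so a homotopy built coordinate by coordinate remains norm non-increasing. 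Consequently a cocycle of norm $<r$ acquires a primitive of norm $<r$, which is exactly the solution $d\eta=\xi$ inside $\sO(r)$.

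Finally I would assemble the pieces: the isometric splitting kills $H^i(\mathcal U,\sO(r))$ for every covering in the cofinal system, while Proposition~\ref{prop.fromtate}(ii), together with the approximation argument of Claim~\ref{claim.oc} (using \cite[Lem.~2.3.1]{dJvdP} to keep the approximants inside $\sO(r)^{\rm oc}$), bridges the gap between the \v{C}ech computation for the overconvergent sheaf and the sheaf cohomology of $\sO(r)$ itself. The weak triviality of $H^i(\mathcal U,\sO^\circ)$ from Proposition~\ref{prop.fromtate}(i) enters to secure openness of the ordinary coboundary, which is what makes the approximation of $\xi$ by overconvergent coboundaries converge.

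The step I expect to be the main obstacle is securing the norm-preserving (constant one) solution rather than merely a bounded one. The open mapping theorem alone yields a primitive of norm $<Cr$ with some $C\ge 1$, which only shows that $H^i(\mathbb B^d,\sO(r))\to H^i(\mathbb B^d,\sO(Cr))$ vanishes; removing the constant is precisely the feature special to the polydisk and requires the explicit isometric Laurent decomposition above. Carrying this through the inductive reduction, verifying that the homotopy stays isometric under refinement of the coverings, and checking its compatibility with the overconvergent approximation is where the genuine work lies.
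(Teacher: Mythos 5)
Your one\-dimensional engine is sound --- the norm non\-increasing Mittag--Leffler splitting on annuli is exactly the mechanism behind the case $d=1$ that the paper cites from \cite[Thm.~3.15]{vdP}, and your observation that the open mapping theorem only gives a primitive with a constant $C$ correctly identifies why an isometric splitting is needed. But the globalization has a genuine gap: coverings of $\mathbb B^{d}$ by product Laurent coverings adapted to the coordinates $t_{1},\dots,t_{d}$ are \emph{not} cofinal among finite affinoid coverings, already for $d=1$. A covering of $\mathbb B^{1}$ by a disk $\{|t-a|\le \epsilon\}$ centered at a point $a$ with $|a|=1$ and its complement admits no refinement by coordinate annuli centered at $0$, since any annulus $\{\rho\le |t|\le 1\}$ with $\rho$ close to $1$ meets both pieces without lying in either. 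Consequently, killing the \v{C}ech groups $\checkH^{i}(\sU,\sO(r))$ for your special coverings says nothing about $H^{i}(\mathbb B^{d},\sO(r))$; even the colimit of \v{C}ech cohomology over \emph{all} finite coverings does not compute sheaf cohomology in degrees $\ge 2$ without a Leray-type local vanishing on intersections, which your induction does not supply (the intersections are polyannuli, for which you would need the analogous statement as well). Moreover, Proposition~\ref{prop.fromtate}(ii) cannot bridge this: it compares overconvergent with ordinary cohomology of $\sO(r)$ on a fixed subdomain, and does nothing to reduce arbitrary coverings to Laurent ones. In van der Put's actual treatment of $d=1$ the decomposition must be taken with respect to \emph{arbitrary} centers --- affinoid subdomains of $\mathbb B^{1}$ are finite unions of disks minus open disks around varying points, and the isometric principal-part decomposition is carried out at each hole --- so restricting to coordinate annuli loses precisely the case that makes the theorem nontrivial.

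It is also worth noting that the paper circumvents this entire difficulty by a different structural move that your proposal omits: using Tate's acyclicity for $\sO$, the statement is first translated into vanishing of $H^{i}(\mathbb B^{d},\sO/\sO(r))$ together with surjectivity on $H^{0}$, and the quotient sheaf $\sO/\sO(r)$ --- unlike $\sO(r)$ itself --- is overconvergent. This makes the base change theorem \cite[Thm.~2.7.4]{dJvdP} available, so that $R^{j}\phi_{*}$ along the linear fibration $\phi:\mathbb B^{d}\to\mathbb B^{d-1}$ can be computed fibrewise at analytic points, reducing to $\mathbb B^{1}_{F_{a}}$, and the induction on $d$ runs through the Leray spectral sequence. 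If you want to avoid that route and keep your tensor-product contracting homotopy, you would have to redo the full norm-controlled version of Tate's refinement argument (Laurent $\Rightarrow$ rational $\Rightarrow$ arbitrary coverings) for $\sO(r)$ --- essentially Bartenwerfer's original, and notoriously laborious, computation --- and in addition justify that \v{C}ech cohomology computes $H^{i}$ for this non-coherent, non-overconvergent sheaf. As it stands, the assembly step of your proof does not go through.
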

This is proven by Bartenwerfer \cite[Theorem]{B2} and using different methods by van der Put \cite[Thm.~3.15]{vdP}. For the convenience of the reader, we sketch van der Put's proof.
\begin{proof}[Idea of proof (van der Put)]
Using Tate's acyclicity theorem the theorem is equivalent to the following two statements:
\begin{itemize}
\item for all $r>0$ and integers $i>0$ the cohomology group $$H^i(\mathbb
  B^d,\sO/\sO(r))=0,$$ 
\item $H^0(\mathbb B^d, \sO) \to H^0(\mathbb B^d, \sO/\sO(r))$ is surjective.
\end{itemize}
The sheaf $\sO/\sO(r)$ is overconvergent  by \cite[Lem.~1.5.2]{vdP}. 
So we can apply base change \cite[Thm.~2.7.4]{dJvdP} for the the linear fibrations $\phi:\mathbb B^d\to \mathbb B^{d-1}$.
Using the fact that for any fibre $\phi^{-1}(a)\cong \mathbb B^1_{F_a}$ over an analytic
point $a$ of $ \mathbb
B^{d-1}$ we have \[(\sO_{\mathbb B^d}/\sO_{\mathbb B^d}(r))|_{\phi^{-1}(a)} \cong
  \sO_{\mathbb B^1_{F_a}}/\sO_{\mathbb B^1_{F_a}}(r) , \]
compare Lemma~\ref{lem.stalks}, we reduce the theorem to the case $d=1$. In fact, by what
is sayed and using
the one-dimensional case of the theorem we get that 
\begin{align*}
\phi_*(\sO_{\mathbb B^d}/\sO_{\mathbb B^d}(r))  &=  \bigoplus_{\mathbb N}  \sO_{\mathbb
                                                  B^{d-1}}/\sO_{\mathbb B^{d-1}}(r) ,\\
R^j \phi_*(\sO_{\mathbb B^d}/\sO_{\mathbb B^d}(r))  &=  0 \quad (j>0)
\end{align*}
and we conclude by the Leray spectral sequence and  by induction on $d$.
 
In the one-dimensional case the theorem follows from an explicit
computation based on the Mittag--Leffler decomposition. 
\end{proof}

\begin{cor}\label{cor.van}
The cohomology group
\[
H^i(\mathbb B^d,\sO^\circ)
\] 
is $K(1)$-torsion for all integers $i>0$.
\end{cor}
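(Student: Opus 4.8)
The plan is to deduce the corollary from Theorem~\ref{thm.van1} by a simple factorization argument. Fix an integer $i>0$ and an element $\lambda\in K$ with $|\lambda|<1$; it suffices to show that multiplication by $\lambda$ annihilates $H^i(\mathbb B^d,\sO^\circ)$, since such $\lambda$ range over all of $K(1)$.

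First I would choose a real number $r$ with $|\lambda|<r<1$. The key observation is that multiplication by $\lambda$ carries the sheaf $\sO^\circ$ into the subsheaf $\sO(r)$: for any section $f$ of $\sO^\circ$ one has $|\lambda f|_{\sup}=|\lambda|\,|f|_{\sup}\le|\lambda|<r$, so $\lambda f$ is a section of $\sO(r)$. Since $r<1$ we moreover have the inclusion of subsheaves $\sO(r)\subseteq\sO^\circ$. Hence multiplication by $\lambda$ on $\sO^\circ$ factors as a composite of $\sO^\circ$-linear sheaf morphisms
\[
\sO^\circ\xrightarrow{\ \cdot\lambda\ }\sO(r)\hookrightarrow\sO^\circ.
\]

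Next I would apply the cohomology functor $H^i(\mathbb B^d,-)$. By functoriality and $K^\circ$-linearity, the resulting endomorphism of $H^i(\mathbb B^d,\sO^\circ)$ is again multiplication by $\lambda$, and it factors through the group $H^i(\mathbb B^d,\sO(r))$. This latter group vanishes by Theorem~\ref{thm.van1}, so multiplication by $\lambda$ is the zero map on $H^i(\mathbb B^d,\sO^\circ)$. As $\lambda$ was an arbitrary element of $K(1)$, this shows that $H^i(\mathbb B^d,\sO^\circ)$ is $K(1)$-torsion, as claimed.

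I do not expect a genuine obstacle here: the whole argument rests on the elementary fact that scaling by $\lambda$ with $|\lambda|<1$ strictly shrinks the supremum norm below any chosen $r\in(|\lambda|,1)$, which lets the multiplication map thread through a sheaf whose positive-degree cohomology is already known to vanish. The only point deserving routine care is confirming that $\cdot\lambda$ and the inclusion are honest morphisms of the relevant sheaves, so that the factorization persists after applying $H^i(\mathbb B^d,-)$; both are immediate from the definitions of $\sO(r)$ and $\sO^\circ$ recorded in the Notations.
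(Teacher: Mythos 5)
Your argument is correct and is exactly the deduction the paper has in mind: Corollary~\ref{cor.van} is stated without proof as an immediate consequence of Theorem~\ref{thm.van1}, and your factorization of multiplication by $\lambda$ through $H^i(\mathbb B^d,\sO(r))=0$ is the intended reasoning (you could even take $r=1$ directly, since $|\lambda f|_{\sup}\le|\lambda|<1$ already places $\lambda f$ in $\sO(1)$). Your reading of ``$K(1)$-torsion'' as annihilation by every $\lambda$ with $|\lambda|<1$ matches the paper's usage, e.g.\ in the proof of Proposition~\ref{prop.fromtate}(i).
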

Indeed, for any $\alpha\in K(1)$ the multiplication by $\alpha$ on $H^{i}(\mathbb B^d,\sO^\circ)$ factors through $H^{i}(\mathbb B^d,\sO(1))$ which vanishes by Theorem~\ref{thm.van1}.
\begin{rmk}
In fact, in \cite[Thm.]{B3} Bartenwerfer shows that $H^{i}(\mathbb B^{d}, \sO^{\circ}) = 0$ for every $i >0$. 
\end{rmk}

\begin{lem}\label{lem.retri}
Let $X=\Sp(A)$ be an affinoid space such that the cohomology group $H^i(X,\sO^\circ)$ is weakly
trivial for some $i>0$. Then for any wlf $\sO^\circ$-module $\sM$ the cohomology group $H^i(X,\sM)$ is weakly
trivial.
\end{lem}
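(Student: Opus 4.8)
The plan is to exhibit a single element of $K^\circ\setminus\{0\}$ annihilating $H^i(X,\sM)$, by transferring an annihilator of $H^i(X,\sO^\circ)$ through the weakly-locally-free structure and then patching the resulting pointwise bounds over $X$. By hypothesis there is $\mu\in K^\circ\setminus\{0\}$ with $\mu\,H^i(X,\sO^\circ)=0$, hence also $\mu\,H^i(X,(\sO^\circ)^n)=0$ for every $n$, since cohomology commutes with finite direct sums.

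First I would fix a point $x\in X$ and apply Proposition~\ref{prop.wlf} in the form (i)$\Rightarrow$(ii): as $\sM$ is wlf there is an injective $\sO^\circ$-linear map $\Psi\colon(\sO^\circ)^n\to\sM$ and an $f\in A^\circ$ with $f(x)\neq0$ and $f\,\coker(\Psi)=0$. The point of the argument is that $f\,\coker(\Psi)=0$ says exactly $f\sM\subseteq\im(\Psi)$, so, using injectivity of $\Psi$, multiplication by $f$ factors through the free module: there is an $\sO^\circ$-linear $\phi\colon\sM\to(\sO^\circ)^n$ with $\Psi\circ\phi=f\cdot\id_\sM$. Passing to cohomology, $\phi_*$ lands in $H^i(X,(\sO^\circ)^n)$, which is killed by $\mu$, whence $\mu f\cdot\id_{H^i(X,\sM)}=\Psi_*\circ(\mu\,\phi_*)=0$. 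Thus $\mu f$ annihilates $H^i(X,\sM)$, with $f$ not vanishing at the prescribed point $x$.

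Next I would globalize this. Carrying out the previous step at every $x\in X$ yields functions $f_x\in A^\circ$ with $f_x(x)\neq0$ and $\mu f_x\,H^i(X,\sM)=0$. The loci $\{f_x\neq0\}$ cover $X$, so by quasi-compactness of $\Sp(A)$ finitely many $f_1,\dots,f_k$ already have no common zero on $X$. Since an affinoid algebra is Noetherian and Jacobson, functions without common zero generate the unit ideal, so $1=\sum_j a_jf_j$ for some $a_j\in A$. Choosing $\nu\in K^\circ\setminus\{0\}$ with $\nu a_j\in A^\circ$ for all (finitely many) $j$ — possible since the $a_j$ are bounded — I would then compute, as operators on the $A^\circ$-module $H^i(X,\sM)$, that $\mu\nu\cdot\id=\sum_j(\nu a_j)\cdot(\mu f_j)\cdot\id=0$, because each $\mu f_j$ already kills $H^i(X,\sM)$. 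As $\mu\nu\in K^\circ\setminus\{0\}$, this shows $H^i(X,\sM)$ is weakly trivial.

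The only genuinely substantial step is the factorization of multiplication by $f$ through $(\sO^\circ)^n$, which is what allows the single hypothesis on $\sO^\circ$ to govern an arbitrary wlf $\sM$; everything else is bookkeeping. The one subtlety I would be careful about is that the factorization alone gives only the \emph{non-uniform, $A^\circ$-valued} annihilators $\mu f_x$, whereas weak triviality demands a uniform annihilator in $K^\circ$; it is precisely the Nullstellensatz-style patching of the last step — turning finitely many zero-free functions into a unit combination and rescaling its coefficients into $A^\circ$ — that upgrades these to the single constant $\mu\nu\in K^\circ\setminus\{0\}$.
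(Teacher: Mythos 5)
Your proof is correct and follows essentially the same route as the paper: pointwise annihilators $\mu f_x$ extracted from the wlf presentation of Proposition~\ref{prop.wlf}, then patched into a single constant in $K^\circ\setminus\{0\}$ by writing $1$ as a combination of the $f_x$ and rescaling the coefficients into $A^\circ$. The only cosmetic difference is local: where the paper runs the long exact cohomology sequence of $0\to(\sO^\circ)^n\xrightarrow{\Psi}\sM\to\coker(\Psi)\to 0$ and uses that $H^i(X,\coker(\Psi))$ is $f$-torsion, you factor $f\cdot\id_\sM$ through $(\sO^\circ)^n$ at the sheaf level (valid, since $f\cdot\id_\sM$ kills the cokernel and $\Psi$ is an isomorphism onto its image) and conclude by functoriality --- an equivalent argument.
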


\begin{proof}
Below we are going  to construct for every point $x\in X$ a function $f_x\in A^\circ$ with
$f_x(x)\ne 0$ and with $f_x\, H^i(X,\sM)=0$. As the $f_x$ generate the unit ideal in $A$,
there exist finitely many points $x_1,\ldots , x_r\in X$ and $c_1,\ldots , c_r\in A^\circ$ with 
\[
c_1 f_{x_1} + \cdots + c_r f_{x_r}=:c\in K^\circ \setminus \{ 0\}.
\]
Then $c\, H^i(X,\sM)=0$.

In order to construct such $f_x$ for given $x\in X$ we use Proposition~\ref{prop.wlf} in
order to find an injective $\sO_{X}^\circ$-linear morphism $\Psi:(\sO^{\circ})^n\to \sM$ and $f'\in \sO^\circ(X)$ with
$f'(x)\ne 0$ and such that $f'\, {\rm coker} (\Psi)=0$.  From the long exact cohomology
sequence corresponding to the short exact sequence
\[
0\to (\sO^{\circ})^n \xrightarrow{\Psi} \sM \to {\rm coker}(\Psi)\to 0
\]
it follows that we can take any nonzero $f_x\in K(r) f'$, where $r\in (0,1)$ is chosen
such that $K(r) \, H^i(X,\sO^\circ)=0$.
\end{proof}

\begin{thm}\label{thm.van2}
For $X/K$ a smooth affinoid space  and for $\sM$ a wlf $\sO^\circ_X$-module the  cohomology groups
$H^i(X,\sM)$ are weakly trivial (as $K^\circ$-modules) for all $i>0$.
\end{thm}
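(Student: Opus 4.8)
The key is to reduce the general smooth affinoid space $X/K$ to the polydisc $\mathbb B^d$, where Corollary~\ref{cor.van} gives the vanishing for $\sO^\circ$, and then to pass from $\sO^\circ$ to an arbitrary wlf sheaf $\sM$ via Lemma~\ref{lem.retri}. So the two tools already assembled are exactly the two halves of the argument: Lemma~\ref{lem.retri} lets us forget about $\sM$ once we control $\sO^\circ$, and Corollary~\ref{cor.van} controls $\sO^\circ$ on the polydisc. The work is in transporting the $\sO^\circ$-vanishing from $\mathbb B^d$ to a general smooth $X$, and the natural mechanism for that is a finite \'etale cover combined with Proposition~\ref{prop.etale}.

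First I would observe that it suffices, by Lemma~\ref{lem.retri}, to prove that $H^i(X,\sO^\circ_X)$ is weakly trivial for all $i>0$ and all smooth affinoid $X$; the statement for general wlf $\sM$ then follows immediately. Next, since weak triviality is insensitive to multiplication by nonzero elements of $K^\circ$ and these questions are local in a suitable sense, I would reduce to a situation amenable to Noether normalization. Concretely, after possibly shrinking $X$ to members of a finite affinoid covering (using that a finite sum $c_1 f_{x_1}+\cdots+c_r f_{x_r}=c\in K^\circ\setminus\{0\}$ patches local weak-triviality statements together, exactly as in the proof of Lemma~\ref{lem.retri}), one arranges that $X$ admits a finite \'etale map $\phi\colon X\to \mathbb B^d$. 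This is the analytic analog of Noether normalization for smooth affinoid algebras, and smoothness is precisely what guarantees the cover can be taken \'etale (the separability of the residue field extensions entering Proposition~\ref{prop.etale} being automatic in the smooth case).

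Granting such a $\phi\colon X\to \mathbb B^d$, the endgame is clean. By Proposition~\ref{prop.etale}, $\phi_*\sO^\circ_X$ is a wlf $\sO^\circ_{\mathbb B^d}$-module. Since $\phi$ is finite, it is affine, so $H^i(X,\sO^\circ_X)=H^i(\mathbb B^d,\phi_*\sO^\circ_X)$ for all $i$. By Corollary~\ref{cor.van} the group $H^i(\mathbb B^d,\sO^\circ)$ is $K(1)$-torsion, hence weakly trivial, for $i>0$; and Lemma~\ref{lem.retri} applied on $\mathbb B^d$ upgrades this from the structure sheaf to the wlf sheaf $\phi_*\sO^\circ_X$, yielding that $H^i(\mathbb B^d,\phi_*\sO^\circ_X)$ is weakly trivial. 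Therefore $H^i(X,\sO^\circ_X)$ is weakly trivial, as desired.

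The main obstacle is the geometric input in the second paragraph: producing, locally on a smooth affinoid $X$, a finite \'etale map to a polydisc $\mathbb B^d$. The existence of a finite surjection to $\mathbb B^d$ (analytic Noether normalization) is standard, but arranging \'etaleness, rather than merely finiteness, requires using smoothness of $A/K$ in the strong sense defined in the Notations (regularity after every finite base change), and one must check that the covering by loci where such covers exist can be refined to a finite affinoid covering compatible with the weak-triviality bookkeeping. Once this local \'etale structure is in hand, the cohomological descent along $\phi$ and the two cited results close the argument with no further difficulty; the $K^\circ$-torsion bounds are controlled uniformly because each local contribution is killed by a nonzero element of $K^\circ$ and there are finitely many pieces.
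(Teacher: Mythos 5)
Your overall skeleton matches the paper's proof (reduce to $\sM=\sO^\circ$ via Lemma~\ref{lem.retri}; locally find finite \'etale maps to $\mathbb B^d$; push forward, use Proposition~\ref{prop.etale}, and conclude on $\mathbb B^d$ via Theorem~\ref{thm.van1} and Lemma~\ref{lem.retri}), but there is a genuine gap at the crucial step: you assert that since $\phi\colon X\to\mathbb B^d$ is finite, hence affine, one has $H^i(X,\sO^\circ_X)=H^i(\mathbb B^d,\phi_*\sO^\circ_X)$. This identity requires $R^j\phi_*\sO^\circ_X=0$ for $j>0$, and that is exactly what fails to be automatic here: $\sO^\circ$ is not a coherent $\sO$-module, so Tate/Kiehl acyclicity does not apply, and affinoid subdomains are not acyclic for $\sO^\circ$ --- their positive-degree cohomology is only \emph{weakly} trivial (Proposition~\ref{prop.fromtate}(i)), which is the whole subtlety this theorem is about. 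The paper must therefore argue as in \eqref{eq.vanhi}: $R^j\phi_*\sO^\circ_X$ is weakly isomorphic to $R^j\phi_*\sO_X(1)$, which receives a surjection from $R^j\phi_*(\sO_X(1)^{\rm oc})$ by Proposition~\ref{prop.fromtate}(ii), and the latter vanishes for $j>0$ by base change \cite[Thm.~2.7.4]{dJvdP} because the fibres of the finite map $\phi$ are zero-dimensional. This gives weak triviality (not vanishing) of $R^j\phi_*\sO^\circ_X$ for $j>0$, and one then concludes with the Leray spectral sequence, using that weak triviality passes to subquotients and through finite filtrations. Your shortcut skips all of this.

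The second gap is the local-to-global step. You claim that weak triviality of $H^i(U_l,\sO^\circ)$ on the members of a finite affinoid covering patches to $X$ ``exactly as in the proof of Lemma~\ref{lem.retri}.'' It does not: the sum $c_1f_{x_1}+\cdots+c_rf_{x_r}=c$ trick there kills the cokernel of a \emph{globally defined} sheaf morphism; it has no mechanism for transporting cohomology classes across a covering. The paper instead runs an induction on the degree $i$ and uses the \v{C}ech spectral sequence $\check{H}^{p}(\sU,\underline{H}^{q}(\sO^\circ))\Rightarrow H^{p+q}(X,\sO^\circ)$: the terms with $0<q<i$ are weakly trivial by the induction hypothesis applied to the (smooth affinoid) intersections of the $U_l$, the term with $q=0$, $p=i$ is weakly trivial by Proposition~\ref{prop.fromtate}(i), and only the terms $H^i(U_l,\sO^\circ)$ remain --- which is precisely the local statement your finite \'etale covers address. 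Without this inductive \v{C}ech argument your reduction to the case of a global finite \'etale $\phi\colon X\to\mathbb B^d$ is unjustified. Finally, the existence of the local finite \'etale maps, which you flagged as the main obstacle, is in fact the easiest ingredient to settle: it is Kiehl's theorem \cite[Satz~1.12]{K}, cited as such in the paper; the real work lies in the two points above.
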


\begin{proof}
By Lemma~\ref{lem.retri} we can assume without loss of generality that $\sM=\sO^\circ$.
We use induction on $i>0$. The base case $i=1$ is handled in the same way as the
induction step, so let us assume $i>1$ and that we already know weak triviality of
$H^j(U,\sO^\circ)$  for all $0<j<i$ and smooth affinoid spaces $U/K$.

Since $X/K$ is smooth,  \cite[Satz 1.12]{K} implies that there exists a finite affinoid covering $\sU=(U_l)_{l\in L}$ and finite
\'etale morphisms $\phi_l:U_l\to \mathbb B^d$.
From the \v{C}ech spectral sequence \[E_{2}^{pq}=\checkH^{p}(\sU, \underline{H}^{q}(\sO^\circ)) \Rightarrow H^{p+q}(X, \sO^\circ)\] we see that $H^{i}(X,\sO^\circ)$ has a filtration whose associated graded piece $\operatorname{gr}^{p}$ is a subquotient of $\checkH^{p}(\sU, \underline{H}^{i-p}(\sO^\circ))$. 
By Proposition~\ref{prop.fromtate}(i), $\operatorname{gr}^{i}$ is weakly trivial. By our
induction assumption,  $\underline{H}^{i-p}(\sO^\circ)(U)$ is weakly trivial for $0<p<i$
and for $U$ an intersection of opens in $\sU$, hence $\operatorname{gr}^{i-p}$ is weakly trivial for these $p$. It thus suffices to show that $\operatorname{gr}^{0}$ is weakly trivial or that 
$H^i(U_l,\sO^\circ_{U_l}) $ is weakly trivial for all $l\in L$.

\smallskip

So in order to show Theorem~\ref{thm.van2}  we can assume  without loss of generality that $\sM=\sO^\circ_X$ and that there exists a
finite \'etale morphism $\phi:X\to
\mathbb B^d$.

 For all $j > 0$ we get morphisms  
\begin{equation}\label{eq.vanhi}
R^j\phi_{*}( \sO_{X}^\circ) \simeq R^j\phi_{*}( \sO_{X}(1)) \leftarrow  R^j\phi_{*}( \sO_{X}(1)^{\rm oc}).
\end{equation}
with a weak isomorphism on the left and a surjective morphism on the right. The
surjectivity follows from Proposition~\ref{prop.fromtate}(ii).
By base change  \cite[Thm.~2.7.4]{dJvdP} the stalk $R^j\phi_{*}( \sO_{X}(1)^{\rm oc})_{a} \simeq H^{j}(X_{a}, \sO_{X}(1)^{\mathrm{oc}}|_{X_{a}})$ 
vanishes for every analytic point $a$ of $\mathbb{B}^{d}$. Since $R^j\phi_{*}( \sO_{X}(1)^{\rm oc})$ is overconvergent \cite[Lem.~2.3.2]{dJvdP}, it follows that $R^j\phi_{*}( \sO_{X}(1)^{\rm oc}) = 0$ and hence  that $R^j\phi_{*}( \sO_{X}^\circ)$ is weakly trivial.

Combining this observation with the Leray spectral sequence we see
that it suffices to show that
$H^i(\mathbb B^d,\phi_* ( \sO_{X}^\circ))$ is weakly trivial for $i>0$. From
Proposition~\ref{prop.etale} we deduce that  $\phi_* (\sO_{X}^\circ)$
is wlf as an $\sO_{\mathbb B^d}^\circ$-module, so we conclude by using Theorem~\ref{thm.van1}
and Lemma~\ref{lem.retri}.
\end{proof}

The following corollary, which we will apply in the next sections, was first shown in
\cite{B1} and \cite[Folgerung~3]{B2}.

\begin{cor}[Bartenwerfer]\label{cor.bartenwerfer}
For $X/K$ smooth affinoid there exists $s\in (0,1)$ such that the map 
\begin{equation}\label{eq.bart}
H^i(X,\sO(sr))\to H^i(X,\sO(r))   
\end{equation}
vanishes for all $r>0$ and integers $i>0$.
\end{cor}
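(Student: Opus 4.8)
The plan is to deduce the Corollary from Theorem~\ref{thm.van2} applied to the wlf sheaf $\sM=\sO^\circ$. First I would record a single scalar: since $X$ is affinoid it has finite cohomological dimension, so only finitely many of the groups $H^i(X,\sO^\circ)$, $i>0$, are non-zero; by Theorem~\ref{thm.van2} each is weakly trivial, hence there is $c\in K^\circ\setminus\{0\}$ with $c\,H^i(X,\sO^\circ)=0$ for all $i>0$. The key observation is then that the inclusion $\sO(sr)\hookrightarrow\sO(r)$ is governed by scalar multiplication: for $\tilde c\in K^\times$ with $|\tilde c|=s$ the map $b\colon\sO(r)\to\sO(sr)$, $f\mapsto\tilde c f$, is an isomorphism of $K^\circ$-module sheaves, and the composite $\sO(r)\xrightarrow{b}\sO(sr)\hookrightarrow\sO(r)$ is multiplication by $\tilde c$. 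Thus on cohomology $j_*\circ b_*$ is multiplication by $\tilde c$, and since $b_*$ is an isomorphism, the inclusion map $j_*$ vanishes as soon as $\tilde c\,H^i(X,\sO(r))=0$. In this way the whole Corollary reduces to producing one $\tilde c\in K^\circ\setminus\{0\}$ with $|\tilde c|<1$ that annihilates $H^i(X,\sO(r))$ for all $r>0$ and $i>0$; one then takes $s=|\tilde c|$.

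To annihilate $H^i(X,\sO(r))$ I would transport the torsion from $\sO^\circ$. For $c'\in K^\times$ with $|c'|$ small, multiplication by $c'$ is an injective $K^\circ$-linear morphism $u\colon\sO^\circ\to\sO(r)$: its image $c'\sO^\circ$ consists of functions of supremum norm $\le|c'|$, and every section of $\sO(r)$ has supremum norm $<r$, so $\coker(u)=\sO(r)/c'\sO^\circ$ is killed by any $\mu\in K^\times$ with $|\mu|\le|c'|/r$. The long exact sequence of $0\to\sO^\circ\xrightarrow{u}\sO(r)\to\coker(u)\to0$ then gives $\mu\,H^i(X,\sO(r))\subseteq\im(u_*)$, while $c\cdot\im(u_*)=u_*\bigl(c\,H^i(X,\sO^\circ)\bigr)=0$ by $K^\circ$-linearity of $u_*$. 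Hence $c\mu$ annihilates $H^i(X,\sO(r))$. Choosing $c'$ and $\mu$ once and for all so that the estimates $|c'|<r$ and $|\mu|\le|c'|/r$ hold simultaneously for every $r$ in a fixed interval $[\rho_1,\rho_2]\subseteq(0,1]$, I obtain a single $\tilde c:=c\mu$ with $\tilde c\,H^i(X,\sO(r))=0$ for all $r\in[\rho_1,\rho_2]$ and all $i>0$.

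Finally I would remove the restriction on $r$ by scaling. For $\lambda\in K^\times$ multiplication by $\lambda$ is an isomorphism of $K^\circ$-module sheaves $\sO(r_0)\xrightarrow{\sim}\sO(|\lambda|r_0)$ which commutes with multiplication by $\tilde c$, so $\tilde c$ annihilates $H^i(X,\sO(r))$ for every $r$ in the $|K^\times|$-orbit of $[\rho_1,\rho_2]$. Since $|\cdot|$ is non-trivial, $[\rho_1,\rho_2]$ can be chosen to meet every coset of $|K^\times|$ in $\R_{>0}$, so this orbit is all of $\R_{>0}$. Together with the first paragraph this proves the Corollary with $s=|\tilde c|$.

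The main obstacle is exactly the uniformity in $r$: Theorem~\ref{thm.van2} controls only $\sO^\circ$, and the naive transport of torsion to $\sO(r)$ yields an annihilator $c\mu$ whose second factor $\mu$ is forced to depend on $r$ through $|\mu|\le|c'|/r$. The two devices that overcome this are (a) the reduction of the inclusion map to a scalar multiplication, so that a single annihilator suffices, and (b) the $|K^\times|$-equivariance of the sheaves $\sO(r)$ under scaling, which confines all radii to one bounded interval on which $c'$ and $\mu$ may be fixed. The only delicate bookkeeping is keeping the open balls $\sO(r)$ and the closed balls $c'\sO^\circ$ apart in the supremum-norm estimates.
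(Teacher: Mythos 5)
Your argument is correct, and it runs on the same two engines as the paper's proof: the scaling isomorphisms $\sO(r)\cong\sO(|\lambda|r)$ for $\lambda\in K^{\times}$, and a single scalar annihilator extracted from Theorem~\ref{thm.van2}; your observation that the composite $\sO(r)\xrightarrow{\tilde c}\sO(sr)\hookrightarrow\sO(r)$ is multiplication by $\tilde c$ is exactly the content of the paper's commutative square. Where you genuinely diverge is in the treatment of radii $r$ not lying in $|K^{\times}|$. The paper never annihilates $H^{i}(X,\sO(r))$ for such $r$: it rounds $r$ down to the nearest power $r'=|\pi|^{n}\le r$, identifies $\sO(s'r')$ and $\sO(r')$ with $\sO(1)$ by scaling so that the inclusion becomes multiplication by $\pi$ on $H^{i}(X,\sO(1))$, hence zero, and absorbs the rounding error by taking $s=|\pi|^{2}$, so that \eqref{eq.bart} factors as $H^{i}(X,\sO(sr))\to H^{i}(X,\sO(s'r'))\xrightarrow{0}H^{i}(X,\sO(r'))\to H^{i}(X,\sO(r))$ with no exact sequences at all. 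You instead prove the stronger intermediate statement that one scalar $\tilde c=c\mu$ kills $H^{i}(X,\sO(r))$ for \emph{every} real $r>0$, via the sandwich $\sO^{\circ}\xrightarrow{c'}\sO(r)$ with cokernel killed by $\mu$, the long exact sequence, uniformity of the estimates on a fundamental interval $[\rho_{1},\rho_{2}]$ for the action of $|K^{\times}|$ on $\R_{>0}$, and propagation by scaling. Your route is slightly longer but more modular (the uniform annihilator is a reusable statement, in the spirit of Lemma~\ref{lem.retri}), while the paper's rounding-and-squaring trick is shorter and purely multiplicative; both produce an $s$ of the same quality. One shared gloss is worth flagging: both proofs need a single scalar that works for all $i>0$ simultaneously. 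You justify this by finite cohomological dimension of affinoid spaces, which is indeed available in van der Put's theory (the paper itself invokes vanishing ``by dimension reasons'' in the proof of Lemma~\ref{lem.vanishing-of-R1}), but you assert it without reference; a citation to \cite{vdP} or \cite{dJvdP} would make this step, which the paper's own ``without loss of generality'' also leaves implicit, fully airtight.
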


\begin{proof}
Choose $\pi\in K(1)\setminus \{ 0\}$ and write $s'=|\pi|$. By Theorem~\ref{thm.van2} we
can assume without loss of generality that $\pi\, H^i(X,\sO(1))=0$ for  $i>0$. Now we
claim $s=s'^2$ satisfies the requested property of the corollary. Indeed, for $r>0$ set
$r'=\max\{|\pi|^n\ |\ n\in \mathbb Z, |\pi|^n\le r \}$. Then we get a commutative square
\[
\xymatrix{
H^i(X,\sO(s'r')) \ar[r]\ar[d]^{\wr}  & H^i(X,\sO(r')) \ar[d]^{\wr} \\
H^i(X,\sO(1)) \ar[r]^{=0} &  H^i(X,\sO(1))
}
\]
where the lower horizontal map is multiplication by $\pi$ and the vertical maps are
induced by the isomorphisms $\sO(s'r') \cong \sO(1)$ and $\sO(r') \cong \sO(1)$ given by
multiplying with the appropriate powers of $\pi$. The morphism \eqref{eq.bart} is the
composition of
\[
H^i(X,\sO(sr)) \to H^i(X,\sO(s'r'))\xrightarrow{=0} H^i(X,\sO(r')) \to H^i(X,\sO(r)). 
\]
\end{proof}

\section{Vanishing of multiplicative cohomology}\label{sec.vanishing-mult}

Given $r'<r$ we write $\sO(r,r') := \sO(r)/\sO(r')$ and, if $r'<r\leq1$, $\sO^{*}(r,r') := \sO^{*}(r)/\sO^{*}(r')$.
\begin{lem}\label{lem.mult-add}
For $r' < r \leq 1$ we have isomorphisms of sheaves of sets $\sO(r) \xrightarrow{\sim} \sO^{*}(r)$ and $\sO(r,r') \xrightarrow{\sim} \sO^{*}(r,r')$ given by $f\mapsto 1+f$. If $r'\geq r^{2}$, the latter isomorphism is an isomorphism of abelian sheaves.
\end{lem}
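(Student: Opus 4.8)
The plan is to establish the three claims in Lemma~\ref{lem.mult-add} in order, building on the fact that the non-archimedean absolute value is multiplicative and ultrametric. First I would treat the map $f \mapsto 1+f$ as a map of sheaves of sets. On each affinoid subdomain, $\sO(r)$ consists of functions with $|f|_{\sup} < r$, and for $r \le 1$ these are precisely the $f$ for which $1+f$ has the form defining $\sO^*(r)$, i.e.\ $|1-(1+f)|_{\sup} = |f|_{\sup} < r$. The inverse is $g \mapsto g-1$, so bijectivity on sections is immediate and compatible with restriction, giving the first isomorphism. For the quotient sheaves I would observe that the map sends $\sO(r')$ bijectively onto $\sO^*(r')$ by the same formula, so it descends to a well-defined bijection $\sO(r,r') \to \sO^*(r,r')$ on the quotients (as sheaves of sets).

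The substantive part is the last sentence: under the hypothesis $r' \ge r^2$, the set-isomorphism $\sO(r,r') \to \sO^*(r,r')$ is a homomorphism of abelian groups, where the source has the additive structure and the target the multiplicative one. I would verify this by computing, for $f, h \in \sO(r)$, the difference between the image of the sum and the product of the images:
\[
(1+f)(1+h) = 1 + (f+h) + fh,
\]
so the multiplicative product corresponds under $g \mapsto g-1$ to $f + h + fh$, which differs from $f+h$ by the error term $fh$. Thus the map respects the group laws exactly up to this correction term $fh$, and the claim reduces to showing that $fh$ lies in $\sO(r')$ so that it vanishes in the quotient $\sO(r,r')$.

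Here the hypothesis $r' \ge r^2$ enters decisively: by the multiplicativity of the supremum seminorm (or, more safely, submultiplicativity $|fh|_{\sup} \le |f|_{\sup}\,|h|_{\sup}$), we have
\[
|fh|_{\sup} \le |f|_{\sup}\,|h|_{\sup} < r\cdot r = r^2 \le r',
\]
so $fh \in \sO(r')$ and hence maps to zero in $\sO(r,r')$. Therefore the induced map on quotients satisfies the homomorphism identity, and combined with bijectivity it is an isomorphism of abelian sheaves. The main obstacle, and the only genuinely non-formal point, is precisely this estimate: one must be careful that the relevant norm is submultiplicative on each affinoid, which is where the condition $r' \ge r^2$ is exactly calibrated to absorb the quadratic error term; I would also double-check that inverses behave correctly, noting that for $g = 1+f$ with $|f|_{\sup} < 1$ the geometric series gives $g^{-1} = 1 + \sum_{k\ge 1}(-f)^k$ with $|g^{-1}-1|_{\sup} \le |f|_{\sup} < r$, so the target is genuinely a group and the inverse lands in $\sO^*(r)$ as required.
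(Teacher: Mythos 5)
Your treatment of the genuinely quantitative points is correct and matches what the paper leaves implicit: the identity $(1+f)(1+h) = 1 + (f+h) + fh$ together with submultiplicativity of the supremum seminorm, $|fh|_{\sup} \le |f|_{\sup}\,|h|_{\sup} < r^{2} \le r'$, is exactly how the hypothesis $r' \ge r^{2}$ enters to make the induced map a homomorphism from the additive to the multiplicative structure, and your check that $\sO^{*}(r)$ is genuinely a sheaf of groups with $|(1+f)^{-1}-1|_{\sup} \le |f|_{\sup}$ is also fine. These are precisely the parts the paper's proof dismisses with ``most of the claims are easy.''

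There is, however, a genuine gap at the one step you wave through: the descent to the quotient sheaves. You argue that since $f \mapsto 1+f$ carries $\sO(r')$ bijectively onto $\sO^{*}(r')$, it descends to a well-defined bijection $\sO(r,r') \to \sO^{*}(r,r')$. That inference is invalid for a map that is not a group homomorphism --- and $f \mapsto 1+f$ is not one (if it were, the final clause of the lemma would be vacuous). A set-theoretic bijection between groups that matches two subgroups need not carry cosets to cosets; you must check that the additive coset $f + \sO(r')$ maps exactly onto the multiplicative coset $(1+f)\,\sO^{*}(r')$, i.e.\ that $|f-g|_{\sup} < r'$ if and only if $|(1+f)(1+g)^{-1}-1|_{\sup} < r'$ (the ``only if'' direction gives well-definedness, the ``if'' direction gives injectivity of the induced map). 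This equivalence is in fact the entire content of the paper's proof: it follows from $(1+f)(1+g)^{-1} - 1 = (f-g)(1+g)^{-1}$ together with $|1+g|_{\sup} = |(1+g)^{-1}|_{\sup} = 1$ (ultrametrically, since $|g|_{\sup} < 1$), so that multiplication by $(1+g)^{\pm 1}$ preserves the supremum seminorm --- again using only submultiplicativity. Note that this step needs no hypothesis $r' \ge r^{2}$ and is independent of your error-term estimate. With this one-line computation inserted your proof is complete; amusingly, you spelled out in detail exactly the step the authors call easy, while asserting without proof the one step they actually prove.
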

\begin{proof}
Most of the claims are easy. 
To see that $f\mapsto 1+f$ induces a map on the quotient sheaves $\sO(r,r') \to \sO^{*}(r,r')$ note that 
 if $f,g$ are functions of supremum seminorm $<1$, then $|f-g|_{\sup} < r'$ if and only if $|(1+f)(1+g)^{-1}-1|_{\sup} < r'$. Indeed, this follows from
the computation $|f-g|_{\sup} = |(1+f) - (1+g)|_{\sup} = |((1+f)(1+g)^{-1} - 1)(1+g)|_{\sup} = |(1+f)(1+g)^{-1} - 1|_{\sup}$, where we used that $|1+g|_{\sup} = |(1+g)^{-1}|_{\sup} = 1$. 
\end{proof}

Given an affinoid space $X$, we consider the following condition on the real number $0<s\leq 1$: 
\begin{align}\label{condition-on-s}\begin{split}
&\text{The map } H^i(X,\sO(sr))\to H^i(X,\sO(r))  \\ 
&\text{vanishes for all $r>0$ and integers $i>0$.}
\end{split}
\end{align}

\begin{prop}\label{prop.mult-vanishing}
Let  $X/K$ be  smooth affinoid. Assume that  $s$ satisfies \eqref{condition-on-s}.
Then  the map
\[
H^1(X, \sO^{*}(sr)) \to H^{1}(X, \sO^{*}(r))
\]
vanishes for every  $r \in (0,s)$.
\end{prop}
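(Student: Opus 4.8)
The plan is to transfer the additive vanishing condition on $s$ to the multiplicative sheaves using the comparison isomorphisms of Lemma~\ref{lem.mult-add}, and then bootstrap from the graded pieces $\sO^*(r,r')$ to the full sheaves $\sO^*(r)$ via a filtration argument. The essential difficulty is that the isomorphism $\sO(r)\xrightarrow{\sim}\sO^*(r)$, $f\mapsto 1+f$, is only a map of sheaves of \emph{sets}, not of groups, so condition~\eqref{condition-on-s} does not directly give vanishing of the multiplicative $H^1$. However, by Lemma~\ref{lem.mult-add}, as soon as $r'\geq r^2$ the induced map on \emph{quotients} $\sO(r,r')\xrightarrow{\sim}\sO^*(r,r')$ \emph{is} an isomorphism of abelian sheaves. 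This suggests working with a geometric chain of radii so that consecutive quotients land in this ``abelian'' regime and can be controlled by the additive theory.

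Concretely, I would fix $r\in(0,s)$ and consider the descending chain of radii $r > sr > s^2 r > \cdots$, comparing $\sO^*(r)$ to the graded quotients $\sO^*(s^{n}r, s^{n+1}r)$. Since $s\le 1$ we have $s^{n+1}r \ge (s^n r)^2$ precisely when $s \ge s^{2n} r$; to be safe one replaces $s$ by a suitable power, or chooses the steps fine enough, so that each successive quotient $\sO(s^n r, s^{n+1}r)\xrightarrow{\sim}\sO^*(s^n r, s^{n+1}r)$ is an isomorphism of abelian sheaves in the sense of Lemma~\ref{lem.mult-add}. For these abelian quotient sheaves, the cohomology $H^i$ is computed additively, and condition~\eqref{condition-on-s} controls the transition maps $H^i(X,\sO(s^{n+1}r))\to H^i(X,\sO(s^n r))$. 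The key point is that a class in $H^1(X,\sO^*(sr))$ can be pushed to $H^1(X,\sO^*(r))$, and I would show its image dies by peeling off one graded layer at a time: the obstruction to lifting or to vanishing at each stage lives in an additive cohomology group on which the relevant transition map is zero by~\eqref{condition-on-s}.

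More precisely, I would argue along the exact sequence of pointed sets (or abelian groups, where applicable)
\[
H^1(X,\sO^*(sr,r')) \to H^1(X,\sO^*(r)) \to H^1(X,\sO^*(r)/\sO^*(sr,r'))
\]
for appropriately chosen $r'$, and compare each term to its additive counterpart. The map $H^1(X,\sO^*(sr))\to H^1(X,\sO^*(r))$ factors through the quotient sheaf, and using that the additive transition maps $H^1(X,\sO(sr))\to H^1(X,\sO(r))$ vanish, together with surjectivity onto the graded pieces supplied by Theorem~\ref{thm.van2}/Corollary~\ref{cor.bartenwerfer}, one sees that the image of any class from $H^1(X,\sO^*(sr))$ becomes divisible by arbitrarily high powers of $\pi$ in $H^1(X,\sO^*(r))$, forcing it to be trivial. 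Care is needed because the short exact sequences of \emph{multiplicative} sheaves only induce long exact sequences of pointed sets in degree $1$, so I would reduce everything to the abelian quotient sheaves $\sO^*(\cdot,\cdot)$ wherever genuine exactness is required.

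The main obstacle I expect is bookkeeping the passage from the non-abelian isomorphism of sets $\sO(r)\cong\sO^*(r)$ to honest abelian cohomology: one must stay within the quotient sheaves $\sO(r,r')\cong\sO^*(r,r')$ where Lemma~\ref{lem.mult-add} guarantees an isomorphism of \emph{abelian} sheaves, and then organize the telescoping over the chain $s^n r$ so that the additive vanishing~\eqref{condition-on-s} applies at every step while the errors accumulate in a controlled (weakly trivial) way. Making the ``divisible by all powers of $\pi$ implies zero'' conclusion rigorous on $H^1(X,\sO^*(r))$ — that is, ruling out a nonzero infinitely-$\pi$-divisible class — will require either a completeness/separatedness input for the multiplicative cohomology or a direct term-by-term lifting argument, and this is where I anticipate the real work lies.
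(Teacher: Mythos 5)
Your first half coincides with the paper's own argument. The paper likewise fixes a decreasing zero sequence of radii (it takes $r_{0}=r$ and $r_{n+1}=r_{n}^{2}/s$, so that $sr_{n+1}\geq r_{n}^{2}$ and $s^{2}r_{n+1}\geq (sr_{n})^{2}$ hold and Lemma~\ref{lem.mult-add} makes the relevant quotient sheaves abelian), and then, exactly as you propose, applies condition~\eqref{condition-on-s} twice in a three-row diagram to show that $H^{i}(X,\sO^{*}(sr_{n},s^{2}r_{n+1}))\to H^{i}(X,\sO^{*}(r_{n},sr_{n+1}))$ vanishes, which permits lifting the given class step by step to a compatible system $(\xi'_{n})\in\lim_{n}H^{1}(X,\sO^{*}(r_{n}))$. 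Your geometric chain $s^{n}r$ would serve equally well after the adjustments you indicate; the precise sequence is immaterial so long as it tends to zero and stays in the abelian regime of Lemma~\ref{lem.mult-add}.

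The genuine gap is the final step, which you explicitly leave open, and neither of your two suggested mechanisms closes it as stated. ``Divisible by arbitrarily high powers of $\pi$'' is not the right frame: what the construction produces is an element of the inverse limit $\lim_{n}H^{1}(X,\sO^{*}(r_{n}))$, and there is no completeness or separatedness of $H^{1}(X,\sO^{*}(r))$ available to kill such an element abstractly --- nor are the transition maps in the limit eventually zero, since that multiplicative vanishing is precisely the proposition being proved. The paper supplies two further ideas. First, a rigidification lemma: a compatible system $(\xi_{n})$ is represented by \v{C}ech classes on \emph{one fixed} finite affinoid covering $\sU$, namely any covering trivializing $\xi_{0}$. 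For each $U\in\sU$ (smooth, being an affinoid subdomain of $X$) one applies Corollary~\ref{cor.bartenwerfer} \emph{a second time} to find $m\geq n$ with $H^{1}(U,\sO(r_{m}))\to H^{1}(U,\sO(r_{n}))$ zero; a diagram chase through $H^{0}(U,\sO^{*}(r_{0},r_{m}))$ and $H^{0}(U,\sO^{*}(r_{0},r_{n}))$, using the degree-zero set-level bijections of Lemma~\ref{lem.mult-add} to transfer the additive vanishing, then gives $\xi_{n}|_{U}=0$ for all $n$. Second, the vanishing of $\lim_{n}\checkH^{1}(\sU,\sO^{*}(r_{n}))$ is proved by a convergent telescope: choosing cocycles $\zeta_{n}\in Z^{1}(\sU,\sO^{*}(r_{n}))$ and $0$-cochains $\eta_{n}$ with $\zeta_{n}=\zeta_{n+1}\cdot\partial\eta_{n}$, the infinite product $\prod_{k\geq 0}\eta_{n+k}$ converges in $C^{0}(\sU,\sO^{*}(r_{n}))$ precisely because $r_{n}\to 0$, whence $\zeta_{n}=\partial\bigl(\prod_{k\geq 0}\eta_{n+k}\bigr)$ and $\xi_{n}=0$. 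This is the ``direct term-by-term lifting argument'' you gestured at, but it requires both the fixed-covering lemma and the convergence of multiplicative infinite products of cochains; without these, the compatible system you construct proves nothing.
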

\begin{proof}

We first prove:

\begin{lem}
Assume that $s$ satisfies \eqref{condition-on-s} for the affinoid space $X$.
For any integer $i>0$, $r\in (0,s)$, and $\xi \in H^i(X,\sO^{*}(sr))$ there exists a decreasing  zero sequence $(r_{n})$ in $(0,s)$ with $r_{0}=r$  and a compatible system
\[
(\xi'_{n}) \in \lim_{n} H^{i}(X, \sO^{*}(r_{n}))
\]
such that $\xi'_{0} \in H^{i}(X,\sO^{*}(r))$ is equal to the image of $\xi$ under $H^{i}(X,\sO^{*}(sr)) \to H^{i}(X,\sO^{*}(r))$.
\end{lem}

\begin{proof}
Put $r_{0}=r$ and inductively $r_{n+1}= r_{n}^{2}/s$. Explicitly, $r_{n} = (r/s)^{2^{n}}s$. Since $r<s$,  the $r_{n}$ form a decreasing zero sequence.

Put $\xi_{0}=\xi$. We will inductively construct elements $\xi_{n} \in H^{i}(X, \sO^{*}(sr_{n}))$ such that the images of $\xi_{n}$ and $\xi_{n+1}$ in $H^{i}(X, \sO^{*}(r_{n}))$ coincide. Denote this common image by $\xi'_{n}$. Then $(\xi'_{n})_{n\geq 0}$ is the desired compatible system.

Assume that we have already constructed $\xi_{n}$.
From the commutative diagram with exact rows
\[
\xymatrix{
H^{i}(X, \sO( sr_{n} )  ) \ar[r]\ar@{=}[d] & H^{i}(X, \sO(sr_{n},s^{2}r_{n+1}  ) ) \ar[r]\ar[d] & H^{i+1}(X, \sO(s^{2}r_{n+1}  ) ) \ar[d]^{=0 \text{ by~\eqref{condition-on-s}}}    \\
H^{i}(X, \sO( sr_{n} ) ) \ar[r]\ar[d]^{=0 \text{ by~\eqref{condition-on-s}}} & H^{i}(X, \sO( sr_{n},sr_{n+1} )  ) \ar[r]\ar[d] & H^{i+1}(X, \sO(sr_{n+1}  ) ) \ar@{=}[d]    \\
H^{i}(X,  \sO( r_{n} )) \ar[r]    & H^{i}(X,  \sO( r_{n},sr_{n+1} )) \ar[r]   & H^{i+1}(X, \sO(sr_{n+1}  )  )   \\
}
\]
we see that $H^{i}(X, \sO(sr_{n},s^{2}r_{n+1})) \to H^{i}(X, \sO(r_{n},sr_{n+1}))$ vanishes for $i>0$.
Since $sr_{n+1} \geq r_{n}^{2}$ and $s^{2}r_{n+1} = sr_{n}^{2} \geq (sr_{n})^{2}$, we may apply Lemma~\ref{lem.mult-add} to deduce that also $H^{i}(X, \sO^{*}(sr_{n},s^{2}r_{n+1})) \to H^{i}(X, \sO^{*}(r_{n},sr_{n+1}))$ vanishes.
From  the commutative diagram with exact rows
\[
\xymatrix{
& H^{i}(X, \sO^{*}( sr_{n} )) \ar[d] \ar[r] & H^{i}(X, \sO^{*}(sr_{n}, s^{2}r_{n+1})) \ar[d]^{=0} \\
H^{i}(X, \sO^{*}(sr_{n+1})) \ar[r] & H^{i}(X, \sO^{*}(r_{n})) \ar[r] & H^{i}(X, \sO^{*}(r_{n},sr_{n+1}))
}
\]
we deduce the existence of the desired element $\xi_{n+1} \in H^{i}(X, \sO^{*}(sr_{n+1}))$ such that the images of $\xi_{n}$ and $\xi_{n+1}$ in $H^{i}(X,\sO^{*}(r_{n}))$ coincide. 
\end{proof}

\begin{lem}
Let $X/K$ be smooth affinoid, and
let $(\xi_{n}) \in \lim_{n} H^{1}(X, \sO^{*}(r_{n}))$ be a compatible system where the $r_{n}$ form a  decreasing zero sequence in $(0,1)$. 
Then there exists a finite affinoid covering  $\sU$ of $X$ such that $(\xi_{n})$ lies in the image of $\lim_{n} \checkH^{1}(\sU, \sO^{*}(r_{n}))$.
\end{lem}
\begin{proof}
Let $\sU$ be a finite affinoid covering of $X$ such that $\xi_{0}$ lies in the image of $\checkH^{1}(\sU, \sO^{*}(r_{0}))$.
We claim that then $\xi_{n}$ lies in the image of $\checkH^{1}(\sU, \sO^{*}(r_{n}))$ for all $n$.
Recall that  for any abelian sheaf $\sF$ the map $\checkH^{1}(\sU, \sF) \to H^{1}(X,\sF)$ is injective, and an element $\xi \in H^{1}(X,\sF)$ belongs to the image of this map if and only if $\xi|_{U}=0$ in $H^{1}(U, \sF|_{U})$  for every $U\in \sU$.

Fix $U \in \sU$. We want to show that $\xi_{n}|_{U} = 0$ in $H^{1}(U, \sO^{*}(r_{n}))$.
By Corollary~\ref{cor.bartenwerfer} there exists $m\geq n$  such that $H^{1}(U, \sO(r_{m})) \to H^{1}(U, \sO(r_{n}))$ vanishes. 
Under the sequence of maps 
\[
H^{1}(U, \sO^{*}(r_{m})) \to H^{1}(U, \sO^{*}(r_{n})) \to H^{1}(U, \sO^{*}(r_{0}))
\]
we have $\xi_{m}|_{U} \mapsto \xi_{n}|_{U} \mapsto 0$. Hence the element $\xi_{m}|_{U}$ lifts to an element $\eta_{m}$ in $H^{0}(U, \sO^{*}(r_{0}, r_{m}))$. We claim that the image of $\eta_{m}$ in $H^{0}(U, \sO^{*}(r_{0}, r_{n}))$ has a preimage in $H^{0}(U, \sO^{*}(r_{0}))$. In view of the commutative diagram with exact rows
\[
\xymatrix{
H^{0}(U, \sO^{*}(r_{0})) \ar[r] & H^{0}(U, \sO^{*}(r_{0}, r_{n})) \ar[r] & H^{1}(U, \sO^{*}(r_{n})) \\
H^{0}(U, \sO^{*}(r_{0})) \ar[r]\ar@{=}[u] & H^{0}(U, \sO^{*}(r_{0},r_{m})) \ar[r]\ar[u] & H^{1}(U, \sO^{*}(r_{m})) \ar[u]
}
\]
this will imply that $\xi_{n}|_{U}=0$.

To prove the claim, note that  Lemma~\ref{lem.mult-add} gives bijections $H^{0}(U, \sO^{*}(r_{0})) \cong H^{0}(U, \sO(r_{0}))$ and $H^{0}(U, \sO^{*}(r_{0}, r_{n})) \cong H^{0}(U, \sO(r_{0}, r_{n}))$ and similarly for $r_{n}$ replaced by $r_{m}$. On the other hand, by the choice of $m$, the map $H^{1}(U, \sO(r_{m})) \to H^{1}(U, \sO(r_{n}))$ vanishes. This implies the existence of the desired lift in view of the commutative diagram with exact rows
\[
\xymatrix{
H^{0}(U, \sO(r_{0})) \ar[r] & H^{0}(U, \sO(r_{0}, r_{n})) \ar[r] & H^{1}(U, \sO(r_{n})) \\
H^{0}(U, \sO(r_{0})) \ar[r]\ar@{=}[u] & H^{0}(U, \sO(r_{0},r_{m})) \ar[r]\ar[u] & H^{1}(U, \sO(r_{m})). \ar[u]_{=0} 
}
\]  
\end{proof}

We can now finish the proof of Proposition~\ref{prop.mult-vanishing}.
Using the two preceding lemmas, it suffices to show that $\lim_{n} \checkH^{1}(\sU, \sO^{*}(r_{n}))$ vanishes for every 
decreasing zero sequence $(r_{n})$. Consider an element $(\xi_{n})_{n}$  in this inverse limit, and choose representing \v{C}ech 1-cocycles $\zeta_{n} \in Z^{1}(\sU, \sO^{*}(r_{n}))$. Then there exist 0-cochains $\eta_{n} \in C^{0}(\sU, \sO^{*}(r_{n}))$ such that $\zeta_{n} = \zeta_{n+1}\cdot \partial \eta_{n}$.
Since $(r_{n})$ is a zero sequence, the product $\prod_{k=0}^{\infty}\eta_{n+k}$ converges in $C^{0}(\sU, \sO^{*}(r_{n}))$, and we get $\zeta_{n} = \partial(\prod_{k=0}^{\infty}\eta_{n+k})$, i.e. $\xi_{n}=0$.
\end{proof}

\begin{cor}\label{cor.multiplicative-vanishing-disks}
For every $r\in (0,1)$ we have $H^{1}(\B^{d}, \sO^{*}(r)) = 0$.
\end{cor}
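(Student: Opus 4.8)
The plan is to fix $r\in(0,1)$, introduce an auxiliary radius $s$, and show that the inclusion-induced map
\[
H^{1}(\B^{d},\sO^{*}(sr)) \to H^{1}(\B^{d},\sO^{*}(r))
\]
is simultaneously the zero map and surjective; a surjective zero map forces $H^{1}(\B^{d},\sO^{*}(r))=0$, which is exactly what we want.

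First I would observe that $\B^{d}$ satisfies condition \eqref{condition-on-s} for \emph{every} $s\in(0,1)$. This is immediate from Theorem~\ref{thm.van1}: since $H^{i}(\B^{d},\sO(r'))=0$ for all $r'>0$ and all $i>0$, the target of the map appearing in \eqref{condition-on-s} is already zero, so that map vanishes trivially. Choosing any $s$ with $r<s<1$, Proposition~\ref{prop.mult-vanishing} then applies (with this $s$ and with $r\in(0,s)$) and yields that $H^{1}(\B^{d},\sO^{*}(sr))\to H^{1}(\B^{d},\sO^{*}(r))$ vanishes. This handles the first of the two properties.

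For surjectivity I would feed the additive vanishing back in through the short exact sequence of abelian sheaves $0\to\sO^{*}(sr)\to\sO^{*}(r)\to\sO^{*}(r,sr)\to0$, whose long exact cohomology sequence reduces the claim to $H^{1}(\B^{d},\sO^{*}(r,sr))=0$. Because $s>r$ we have $sr\ge r^{2}$, so Lemma~\ref{lem.mult-add} identifies $\sO^{*}(r,sr)$ with the additive quotient $\sO(r,sr)$ as abelian sheaves. Finally $H^{1}(\B^{d},\sO(r,sr))=0$ follows from the additive short exact sequence $0\to\sO(sr)\to\sO(r)\to\sO(r,sr)\to0$ together with $H^{1}(\B^{d},\sO(r))=H^{2}(\B^{d},\sO(sr))=0$ from Theorem~\ref{thm.van1}.

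Combining the two inputs---the map is zero by Proposition~\ref{prop.mult-vanishing} and surjective by the exact-sequence argument---gives $H^{1}(\B^{d},\sO^{*}(r))=0$. I expect the only delicate point to be the bookkeeping of the inequalities relating $r$, $s$, and $sr$: one must choose $s$ strictly between $r$ and $1$ so that simultaneously $r<s$ (to invoke Proposition~\ref{prop.mult-vanishing}) and $sr\ge r^{2}$ (to invoke the abelian-sheaf part of Lemma~\ref{lem.mult-add}). There is no genuine analytic obstacle, since all of the hard work is already packaged into Theorem~\ref{thm.van1} and Proposition~\ref{prop.mult-vanishing}.
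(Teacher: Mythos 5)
Your proof is correct: condition \eqref{condition-on-s} does hold trivially for $X=\B^{d}$ by Theorem~\ref{thm.van1}, Proposition~\ref{prop.mult-vanishing} applies with any $s\in(r,1)$, and your surjectivity argument is sound --- the sequence $0\to\sO^{*}(sr)\to\sO^{*}(r)\to\sO^{*}(r,sr)\to 0$ is exact, Lemma~\ref{lem.mult-add} identifies $\sO^{*}(r,sr)$ with $\sO(r,sr)$ as abelian sheaves because $sr\geq r^{2}$, and $H^{1}(\B^{d},\sO(r,sr))=0$ follows from $H^{1}(\B^{d},\sO(r))=H^{2}(\B^{d},\sO(sr))=0$. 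The paper, however, is shorter at exactly the point where you introduce the auxiliary radius: condition \eqref{condition-on-s} is formulated for $0<s\leq 1$, and the paper takes $s=1$ (legitimate, since Theorem~\ref{thm.van1} makes the condition hold trivially there too). With $s=1$ the map furnished by Proposition~\ref{prop.mult-vanishing} is the \emph{identity} of $H^{1}(\B^{d},\sO^{*}(r))$, so its vanishing immediately forces the group to be zero, and your entire second step (surjectivity via the long exact sequence) becomes superfluous. Your detour is not wrong, and it is worth noting what it would and would not generalize to: for a general smooth affinoid $X$ where \eqref{condition-on-s} is only known for some $s<1$ (Corollary~\ref{cor.bartenwerfer}), your surjectivity argument breaks down, since there $H^{1}(X,\sO(r))$ and $H^{2}(X,\sO(sr))$ need not vanish outright --- only the transition maps do --- so on $\B^{d}$ your argument secretly uses the full strength of Theorem~\ref{thm.van1} twice, once through Proposition~\ref{prop.mult-vanishing} and once directly. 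The endpoint inequality $s\leq 1$ in \eqref{condition-on-s} is thus not mere bookkeeping; it is what lets the proposition do all the work in one line.
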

\begin{proof}
By Theorem~\ref{thm.van1}, $s=1$ satisfies condition \eqref{condition-on-s} for $X=\B^{d}$.
Hence by 
Proposition~\ref{prop.mult-vanishing}, the identity map on $H^{1}(\B^{d}, \sO^{*}(r))$ vanishes.
\end{proof}

\begin{cor}\label{cor.mult-vanishing}
Let $X/K$ be a smooth affinoid space. Then there exists $0<r\leq 1$ such that
\[
H^{1}(X, \sO^{*}) \to H^{1}(X, \sO^{*}/\sO^{*}(r'))
\]
is injective for every $r'\in (0,r)$.
\end{cor}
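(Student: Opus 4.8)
The plan is to deduce the injectivity directly from the long exact cohomology sequence attached to the short exact sequence of abelian sheaves
\[
1 \to \sO^{*}(r') \to \sO^{*} \to \sO^{*}/\sO^{*}(r') \to 1,
\]
which makes sense for $r'\in(0,1)$ since $\sO^{*}(r')$ is then a subgroup sheaf of $\sO^{*}$. The relevant segment
\[
H^{1}(X,\sO^{*}(r')) \xrightarrow{\ \iota\ } H^{1}(X,\sO^{*}) \to H^{1}(X,\sO^{*}/\sO^{*}(r'))
\]
shows that the kernel of the right-hand map is exactly the image of $\iota$. Hence it suffices to exhibit a single $r\in(0,1)$ such that $\iota$ vanishes for every $r'\in(0,r)$.

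First I would apply Corollary~\ref{cor.bartenwerfer} to fix $s\in(0,1)$ satisfying condition~\eqref{condition-on-s}, and then set $r:=s^{2}$. Given $r'\in(0,s^{2})$, put $\rho:=r'/s$, so that $\rho\in(0,s)$ and $s\rho=r'$. Applying Proposition~\ref{prop.mult-vanishing} with this $\rho$ in the role of its bound variable then yields that the map
\[
H^{1}(X,\sO^{*}(r')) = H^{1}(X,\sO^{*}(s\rho)) \to H^{1}(X,\sO^{*}(\rho)) = H^{1}(X,\sO^{*}(r'/s))
\]
vanishes.

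The remaining point is that $\iota$ factors through this map. Since $s<1$ we have the chain of subsheaf inclusions $\sO^{*}(r')\subseteq\sO^{*}(r'/s)\subseteq\sO^{*}$, so the inclusion $\sO^{*}(r')\hookrightarrow\sO^{*}$ factors through $\sO^{*}(r'/s)$; passing to $H^{1}$ exhibits $\iota$ as the composite $H^{1}(X,\sO^{*}(r'))\to H^{1}(X,\sO^{*}(r'/s))\to H^{1}(X,\sO^{*})$. As the first arrow already vanishes, $\iota=0$, and the desired injectivity follows. The conceptual obstacle here is precisely this factorization trick: Proposition~\ref{prop.mult-vanishing} only provides vanishing of maps \emph{between the small sheaves} $\sO^{*}(\cdot)$, and one must promote such a vanishing to a vanishing of the map into the full sheaf $\sO^{*}$ by interposing the intermediate radius $r'/s$. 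The rest is bookkeeping of radii, whose only real constraint—that $\rho=r'/s$ stay in $(0,s)$—is what dictates the choice $r=s^{2}$.
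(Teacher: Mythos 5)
Your proof is correct and follows essentially the same route as the paper, whose own proof is just the two-line recipe ``take $s$ from Corollary~\ref{cor.bartenwerfer} and set $r=s^{2}$, then apply Proposition~\ref{prop.mult-vanishing}''. Your write-up merely makes explicit the details the paper leaves implicit: the long exact sequence identifying $\ker\bigl(H^{1}(X,\sO^{*})\to H^{1}(X,\sO^{*}/\sO^{*}(r'))\bigr)$ with the image of $H^{1}(X,\sO^{*}(r'))$, and the factorization of that map through $H^{1}(X,\sO^{*}(r'/s))$, where Proposition~\ref{prop.mult-vanishing} applied with $\rho=r'/s\in(0,s)$ forces vanishing.
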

\begin{proof}
By Corollary~\ref{cor.bartenwerfer} there exists $0< s \leq 1$ satisfying \eqref{condition-on-s}. By Proposition~\ref{prop.mult-vanishing} we can take $r=s^{2}$.
\end{proof}

\section{Homotopy invariance of \texorpdfstring{$\Pic$}{Pic}}
\label{sec.pic}

In this section we prove Theorem~\ref{thm1}.
Given $0 < r \leq 1$, we set $\sO^{*}(\infty,r) = \sO^{*}/\sO^{*}(r)$. 
Let $X = \Sp(A)$ be an affinoid space, and let $p: X\times \B^{1} \to X$ be the projection, $\sigma: X \to X \times \B^{1}$ the zero section. 

\begin{lem}\label{lem.stalks}
For any fibre $p^{-1}(a)\cong \B^{1}_{F_{a}}$ over an analytic point $a$ of $X$ we have
\[
\sO^{*}_{X\times\B^{1}}(\infty,r)|_{p^{-1}(a)} \cong \sO^{*}_{\B^{1}_{F_{a}}}(\infty,r).
\]
\end{lem}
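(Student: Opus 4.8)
The plan is to prove the statement stalk by stalk and to reduce the multiplicative assertion to the additive fibre computation already used in the proof of Theorem~\ref{thm.van1}. Fixing a point $b\in p^{-1}(a)$, I would first construct the comparison map: restriction of functions along the inclusion $p^{-1}(a)\hookrightarrow X\times\B^{1}$ carries invertible germs to invertible germs, and since for any analytic point $x$ of $p^{-1}(a)$ one has $|f(x)|\le |f|_{\sup}$, restriction to the fibre does not increase the supremum seminorm. Hence it sends $\sO^{*}_{X\times\B^{1}}(r)$ into $\sO^{*}_{\B^{1}_{F_{a}}}(r)$, and on passing to quotients yields a natural morphism $\sO^{*}_{X\times\B^{1}}(\infty,r)|_{p^{-1}(a)}\to \sO^{*}_{\B^{1}_{F_{a}}}(\infty,r)$. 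It then suffices to show that this morphism is an isomorphism on the stalk at every such $b$.

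For surjectivity I would use that $p^{-1}(a)\cong \B^{1}_{F_{a}}$ carries the structure sheaf induced by restriction of $\sO_{X\times\B^{1}}$ (this is the content of the base change theorem \cite[Thm.~2.7.4]{dJvdP} together with the explicit product structure of $p$), so every germ of a function on $\B^{1}_{F_{a}}$ at $b$ is the restriction of a germ on $X\times\B^{1}$ at the image of $b$. A lift of an invertible germ takes a nonzero value at $b$ and is therefore invertible on a neighbourhood, so it provides the required preimage. For injectivity I would take an invertible germ on $X\times\B^{1}$ mapping to the class of $1$ in $\sO^{*}_{\B^{1}_{F_{a}}}(\infty,r)$, write it as $1+f$, and invoke Lemma~\ref{lem.mult-add}: the hypothesis translates into the statement that the restriction of $f$ to the fibre has supremum seminorm $<r$, and what I want to conclude is that $f$ itself has supremum seminorm $<r$ on some neighbourhood, i.e.\ that the germ of $f$ lies in $\sO_{X\times\B^{1}}(r)$.

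This last implication is exactly the injectivity in the additive fibre identification $(\sO_{X\times\B^{1}}/\sO_{X\times\B^{1}}(r))|_{p^{-1}(a)}\cong \sO_{\B^{1}_{F_{a}}}/\sO_{\B^{1}_{F_{a}}}(r)$ used in the proof of Theorem~\ref{thm.van1}, and I would cite it there. Thus the multiplicative statement follows formally from the additive one once the bijection $f\mapsto 1+f$ of Lemma~\ref{lem.mult-add} is in place, so the only genuinely new content is the translation between the two pictures.

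The main obstacle is precisely the additive injectivity just mentioned: that the supremum seminorm on the fibre detects membership in $\sO(r)$ locally on the total space. A priori restriction to $p^{-1}(a)$ only decreases the seminorm, so one must rule out that $f$ has seminorm $\ge r$ on every neighbourhood of $b$ while becoming small on the fibre. I would control this through the base change theorem \cite[Thm.~2.7.4]{dJvdP} and the trivialization of $p$ as a relative one-dimensional disk over $F_{a}$, which allows one to approximate a function on the fibre by functions on wide neighbourhoods while preserving the seminorm bound; this semicontinuity of the seminorm in the fibre direction is the technical heart that the multiplicative and additive computations share.
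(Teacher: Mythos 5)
Your overall architecture---construct the natural restriction morphism, reduce the multiplicative statement to the additive one via $f\mapsto 1+f$ as in Lemma~\ref{lem.mult-add}, and check everything on stalks---is reasonable and close in spirit to what the paper's one-line citation of \cite[Lemmas~2.7.1, 2.7.2]{dJvdP} encapsulates, but there are genuine gaps. First, your surjectivity step rests on the claim that every germ of a function on $\B^{1}_{F_{a}}$ is the restriction of a germ on $X\times\B^{1}$. This is false: $F_{a}$ is a \emph{completed} residue field, so the map $\colim_{U\ni a}\sO(p^{-1}(U))\to\sO(\B^{1}_{F_{a}})$ has only dense image (this density, with control of the supremum seminorm, is exactly the content of the cited lemmas of \cite{dJvdP}). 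Density does suffice for the quotient sheaf $\sO^{*}(\infty,r)$, but that requires an argument you do not give: for an invertible $g$ on an affinoid piece of the fibre one has $\inf|g|>0$, an approximant $\tilde g$ with $|g-\tilde g|_{\sup}<r\cdot\inf|g|$ has the same class modulo $\sO^{*}(r)$, and one must still check that $\tilde g$ is invertible on a neighbourhood in the total space, which is again a seminorm-control statement, not a consequence of $\tilde g(b)\neq 0$ alone being open.

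Second, and more seriously, the ``technical heart'' you correctly isolate---that the supremum seminorm on shrinking neighbourhoods of the fibre converges to the seminorm on the fibre, so that smallness of $f|_{p^{-1}(a)}$ forces the germ of $f$ into $\sO(r)$---is precisely what needs proof, and neither of your proposed sources supplies it. The base change theorem \cite[Thm.~2.7.4]{dJvdP} computes stalks of higher direct images of overconvergent sheaves and \emph{presupposes} the identification of restricted sheaves rather than proving it; and the additive identification ``used in the proof of Theorem~\ref{thm.van1}'' is justified there by the words ``compare Lemma~\ref{lem.stalks}''---that is, by the very lemma you are proving---so your appeal to it is circular within the paper. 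The correct references are \cite[Lemmas~2.7.1, 2.7.2]{dJvdP}, which give the density of restriction together with $\lim_{U}|f|_{\sup,p^{-1}(U)}=|f|_{\sup,p^{-1}(a)}$; this is the paper's entire proof. A minor further point: verifying an isomorphism of sheaves on stalks at analytic points is legitimate only for overconvergent sheaves (the G-topology does not have enough points in general); $\sO^{*}(\infty,r)$ is indeed overconvergent, as the paper notes in the proof of Lemma~\ref{lem.vanishing-of-R1}, but your argument should record this.
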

\begin{proof}
This follows easily from \cite[Lemmas~2.7.1, 2.7.2]{dJvdP}.
\end{proof}

\begin{lem}\label{lem.vanishing-of-R1}
We have $R^{1}p_{*}\sO^{*}_{X\times\B^{1}}(\infty,r) =0$.
\end{lem}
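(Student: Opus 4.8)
The plan is to show that $R^{1}p_{*}\sO^{*}_{X\times\B^{1}}(\infty,r)$ vanishes by checking that all of its stalks vanish. Write $\sF=\sO^{*}_{X\times\B^{1}}(\infty,r)$. Since $R^{1}p_{*}\sF$ is the sheafification of $U\mapsto H^{1}(p^{-1}(U),\sF)$, its stalk at an analytic point $a\in X$ is $\colim_{U\ni a}H^{1}(p^{-1}(U),\sF)$, which by the base change theorem \cite[Thm.~2.7.4]{dJvdP} (applied exactly as in the proof of Theorem~\ref{thm.van1}) is computed by the cohomology of the fibre, $H^{1}(p^{-1}(a),\sF|_{p^{-1}(a)})$. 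By Lemma~\ref{lem.stalks} the restriction $\sF|_{p^{-1}(a)}$ is $\sO^{*}_{\B^{1}_{F_{a}}}(\infty,r)$. Thus everything reduces to proving $H^{1}(\B^{1}_{F},\sO^{*}(\infty,r))=0$ for $F=F_{a}$ an arbitrary complete valued extension of $K$.

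On $\B^{1}_{F}$ I would use the short exact sequence of abelian sheaves $0\to\sO^{*}(r)\to\sO^{*}\to\sO^{*}(\infty,r)\to 0$, whose long exact cohomology sequence reads
\[
H^{1}(\B^{1}_{F},\sO^{*})\to H^{1}(\B^{1}_{F},\sO^{*}(\infty,r))\to H^{2}(\B^{1}_{F},\sO^{*}(r)).
\]
The left-hand term is $\Pic(\B^{1}_{F})$, which vanishes because every finitely generated projective module over $F\langle t\rangle$ is free (L\"utkebohmert, \cite{L}). Hence it suffices to prove the vanishing of $H^{2}(\B^{1}_{F},\sO^{*}(r))$.

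For this I would filter $\sO^{*}(r)$ by the subsheaves $\sO^{*}(r_{n})$ with $r_{n}=r^{2^{n}}$, so that $r=r_{0}>r_{1}>\dots\to 0$. For each finite stage the quotient $\sO^{*}(r_{0},r_{n})=\sO^{*}(r_{0})/\sO^{*}(r_{n})$ carries a finite filtration with graded pieces $\sO^{*}(r_{k},r_{k+1})$, $0\le k<n$; since $r_{k+1}=r_{k}^{2}$, Lemma~\ref{lem.mult-add} identifies each such piece with the additive sheaf $\sO(r_{k},r_{k+1})$. From the short exact sequences $0\to\sO(r_{k+1})\to\sO(r_{k})\to\sO(r_{k},r_{k+1})\to 0$ together with Theorem~\ref{thm.van1} one gets $H^{1}(\B^{1}_{F},\sO(r_{k},r_{k+1}))=H^{2}(\B^{1}_{F},\sO(r_{k},r_{k+1}))=0$, and walking up the finite filtration yields $H^{1}(\B^{1}_{F},\sO^{*}(r_{0},r_{n}))=H^{2}(\B^{1}_{F},\sO^{*}(r_{0},r_{n}))=0$ for every $n$. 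Since $r_{n}\to 0$ one has $\sO^{*}(r)=\lim_{n}\sO^{*}(r_{0},r_{n})$ with surjective transition maps, and the Milnor exact sequence gives
\[
0\to{\lim_{n}}^{1}H^{1}(\B^{1}_{F},\sO^{*}(r_{0},r_{n}))\to H^{2}(\B^{1}_{F},\sO^{*}(r))\to\lim_{n}H^{2}(\B^{1}_{F},\sO^{*}(r_{0},r_{n}))\to 0,
\]
whose outer terms vanish; hence $H^{2}(\B^{1}_{F},\sO^{*}(r))=0$. I expect the passage to the limit to be the main obstacle: it requires that sheaf cohomology commute suitably with the countable inverse limit, which I would justify either through the $R\lim$ spectral sequence (the Mittag--Leffler condition holds since the transition maps are surjective) or, more concretely, by a convergent-product argument on \v{C}ech representatives exactly as in the last step of the proof of Proposition~\ref{prop.mult-vanishing}.
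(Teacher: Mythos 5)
Your overall route coincides with the paper's: reduce via overconvergence to stalks at analytic points, identify the stalk with $H^{1}(\B^{1}_{F_{a}},\sO^{*}_{\B^{1}_{F_{a}}}(\infty,r))$ by base change \cite[Thm.~2.7.4]{dJvdP} and Lemma~\ref{lem.stalks}, and then use the exact sequence $H^{1}(\B^{1}_{F},\sO^{*})\to H^{1}(\B^{1}_{F},\sO^{*}(\infty,r))\to H^{2}(\B^{1}_{F},\sO^{*}(r))$; your appeal to L\"utkebohmert for the left-hand term is a perfectly good substitute for the paper's remark that the Tate algebra is a UFD. One point you elide: vanishing of the stalks at \emph{analytic} points implies vanishing of the sheaf only for \emph{overconvergent} sheaves --- on the G-topology a nonzero abelian sheaf can have all such stalks zero. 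The paper's first sentence records precisely that $\sO^{*}_{X\times\B^{1}}(\infty,r)$ and its higher direct images are overconvergent (\cite[1.5.3]{vdP}, \cite[Lem.~2.3.2]{dJvdP}); you need that sentence as well, and it is also what legitimizes your use of the base change theorem.

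The genuine gap is in your treatment of $H^{2}(\B^{1}_{F},\sO^{*}(r))$. The finite stages are fine: with $r_{n+1}=r_{n}^{2}$, Lemma~\ref{lem.mult-add} and Theorem~\ref{thm.van1} do give $H^{1}=H^{2}=0$ for each $\sO^{*}(r_{0},r_{n})$. But the passage to the limit does not hold up as justified. First, the identification $\sO^{*}(r)\cong\lim_{n}\sO^{*}(r_{0},r_{n})$ (let alone $\simeq R\lim$) is not automatic: surjectivity of the comparison map requires locally lifting a whole compatible system $(s_{n})$, and the covering on which $s_{n}$ lifts depends on $n$, so one faces infinitely many shrinkings. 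Second, and more importantly, ``Mittag--Leffler holds since the transition maps are surjective'' conflates surjectivity of sheaf maps with surjectivity on sections: the Milnor sequence for $R\lim$ needs ML for the inverse systems of sections, resp.~\v{C}ech cochains, over (intersections of) a covering, and the obstruction to lifting a section of $\sO^{*}(r_{0},r_{n})$ over an affinoid $V$ to $\sO^{*}(r_{0},r_{n+1})$ lies in $H^{1}(V,\sO^{*}(r_{n},r_{n+1}))\cong H^{1}(V,\sO(r_{n},r_{n+1}))$, which has no reason to vanish for a general subdomain $V\subset\B^{1}$ --- Corollary~\ref{cor.bartenwerfer} gives only a pro-vanishing, not vanishing at a fixed level. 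Your \v{C}ech fallback inherits the same lifting problem and, in degree $2$, additionally lacks the comparison used in Proposition~\ref{prop.mult-vanishing}, which rests on the degree-$1$ facts that $\checkH^{1}(\sU,\sF)\to H^{1}(X,\sF)$ is injective with image detected by restrictions; there is no such statement for $H^{2}$. All of this machinery is in any case unnecessary: the paper kills the right-hand term in one line ``by dimension reasons'' --- $\B^{1}_{F_{a}}$ is a one-dimensional affinoid space, and the cohomology of \emph{any} abelian sheaf on an affinoid space vanishes above its dimension \cite{vdP}. Replacing your filtration-and-limit argument by this observation repairs the proof and brings it in line with the paper's.
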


\begin{proof}
The sheaf $\sO^{*}_{X\times\B^{1}}(\infty,r)$ and hence its higher direct images are overconvergent (see~\cite[1.5.3]{vdP}, \cite[Lem.~2.3.2]{dJvdP}). Hence it suffices to prove that for any analytic point $a$ of $X$ the stalk $R^{1}p_{*}\sO^{*}_{X\times\B^{1}}(\infty,r)_{a}$ vanishes. By base change \cite[Thm.~2.7.4]{dJvdP} and Lemma~\ref{lem.stalks}, we have
\[
R^{1}p_{*}\sO^{*}_{X\times\B^{1}}(\infty,r)_{a} \cong H^{1}(\B^{1}_{F_{a}}, \sO^{*}_{\B^{1}_{F_{a}}}(\infty,r)).
\]
In the exact sequence
\[
H^{1}(\B^{1}_{F_{a}}, \sO^{*}_{\B^{1}_{F_{a}}}) \to H^{1}(\B^{1}_{F_{a}}, \sO^{*}_{\B^{1}_{F_{a}}}(\infty,r)) \to H^{2}(\B^{1}_{F_{a}}, \sO^{*}_{\B^{1}_{F_{a}}}(r))
\]
the group on the left vanishes because the Tate algebra is a UFD, the group on the right vanishes by dimension reasons.
\end{proof}

Fix $\pi\in K\setminus\{0\}$ with $|\pi|<1$.
Let $t$ denote the coordinate on $\B^{1}$. Then $t \mapsto \pi t$ induces a map 
$p_{*}\sO^{*}_{X\times\B^{1}}(\infty,r) \to p_{*}\sO^{*}_{X\times\B^{1}}(\infty,r)$.
\begin{lem}\label{lem.pro-iso-direct-image}
We have an isomorphism of pro-abelian sheaves
\[
\prolim{t \mapsto \pi t} p_{*}\sO^{*}_{X\times\B^{1}}(\infty,r)  \cong \sO^{*}_{X}(\infty,r)
\]
\end{lem}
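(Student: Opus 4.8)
I want to show that the pro-system $\prolim{t\mapsto \pi t} p_*\sO^*_{X\times\B^1}(\infty,r)$, where the transition maps are induced by $t\mapsto \pi t$, is isomorphic as a pro-abelian sheaf to the constant sheaf $\sO^*_X(\infty,r)$ via pullback along the zero section $\sigma$. The key geometric idea is that scaling the coordinate $t$ by $\pi$ shrinks the unit disk, and the intersection over all these rescalings collapses onto the central fiber over the origin, which is exactly the zero section. Since we are working with quotient sheaves $\sO^*(\infty,r)=\sO^*/\sO^*(r)$, a function is only remembered modulo the subsheaf $\sO^*(r)$ of functions very close to $1$; after enough rescalings by $\pi$, the non-constant part of any analytic function on the disk becomes small enough to lie in $\sO^*(r)$, so only the constant term survives in the limit.

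\textbf{First I would set up the two natural maps.} Pullback along the zero section gives $\sigma^*: p_*\sO^*_{X\times\B^1}(\infty,r) \to \sO^*_X(\infty,r)$, and this map is compatible with the transition maps $t\mapsto \pi t$ because the zero section is fixed by the rescaling. Hence $\sigma^*$ induces a morphism of pro-sheaves $\prolim{} p_*\sO^*_{X\times\B^1}(\infty,r) \to \sO^*_X(\infty,r)$ (where the target is the constant pro-system). In the other direction, pullback along the projection $p$ gives $p^*: \sO^*_X(\infty,r)\to p_*\sO^*_{X\times\B^1}(\infty,r)$, a section of $\sigma^*$. I want to show $\sigma^*$ is a pro-isomorphism, equivalently that the composite $p^*\sigma^*$ becomes the identity on the pro-system.

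\textbf{The concrete computation is at the level of sections.} For an affinoid subdomain $U=\Sp(B)\subset X$, a section of $p_*\sO^*_{X\times\B^1}(\infty,r)$ over $U$ is represented by an invertible analytic function $f\in B\langle t\rangle$, taken modulo $\sO^*(r)$. Since the Tate algebra $B\langle t\rangle$ is "small" over $B$, such an $f$ can be written (after normalizing) as $f = c(1 + g)$ with $c\in B^*$ a unit and $g\in B\langle t\rangle$ with $g(0)=0$, i.e.\ $g$ has no constant term in $t$. Applying the transition map $t\mapsto \pi t$ replaces $g(t)$ by $g(\pi t)$; writing $g=\sum_{k\ge 1} g_k t^k$, this scales the $k$-th coefficient by $\pi^k$, and since $|\pi|<1$ the supremum norm of $g(\pi^n t)$ tends to zero as $n\to\infty$. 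Thus after finitely many transition maps $g(\pi^n t)$ has supremum seminorm $<r$, so $1+g(\pi^n t)\in \sO^*(r)$ becomes trivial in the quotient, and the class of $f(\pi^n t)$ equals that of the constant $c=f(0)=\sigma^*(f)$. This is precisely the statement that $\sigma^*$ and $p^*$ are mutually inverse in the pro-category.

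\textbf{I expect the main obstacle to be the sheaf-theoretic bookkeeping rather than the scaling estimate.} The scaling argument is elementary once one works with representatives, but one must verify that the normalization $f=c(1+g)$ can be made compatibly and that the estimate "finitely many steps suffice" is uniform enough to give a genuine morphism of pro-sheaves, not just a pointwise statement on sections. In particular I would need to check compatibility with restriction to affinoid subdomains and that the bound on the number of transition maps needed can be controlled (here the factorization of units in $\sO^*$ as constant times a function in $\sO^*(r)$-distance of $1$, which relies on $|\pi|<1$ and the structure of $B\langle t\rangle$, is the crucial input). Invoking Lemma~\ref{lem.mult-add} to translate between the multiplicative sheaf $\sO^*(r)$ and the additive $\sO(r)$ should streamline the estimate, since the additive scaling $g(t)\mapsto g(\pi t)$ is transparent on power series coefficients.
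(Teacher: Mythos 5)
There is a genuine gap, and it sits exactly where you defer to ``sheaf-theoretic bookkeeping'': your opening reduction --- ``a section of $p_{*}\sO^{*}_{X\times\B^{1}}(\infty,r)$ over $U=\Sp(B)$ is represented by an invertible analytic function $f\in B\langle t\rangle$'' --- is unjustified. The sheaf $\sO^{*}(\infty,r)=\sO^{*}/\sO^{*}(r)$ is a \emph{sheaf} quotient, so a section over $U\times\B^{1}$ is only locally of the claimed form: it is a family of local units $f_{i}$ with $f_{i}f_{j}^{-1}\in\sO^{*}(r)$ on overlaps, and it lifts to a global unit only if its boundary class in $H^{1}(U\times\B^{1},\sO^{*}(r))$ vanishes. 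For a general affinoid $U$ (note that $X$ is \emph{not} assumed smooth in this lemma) no such vanishing is available; controlling exactly this group is the hard content of Sections~\ref{sec.vanad}--\ref{sec.vanishing-mult}, and even there the outright vanishing $H^{1}(\,\cdot\,,\sO^{*}(r))=0$ is proved only for polydisks over a complete field (Corollary~\ref{cor.multiplicative-vanishing-disks}). As written, your scaling computation therefore applies only to the image of $H^{0}(U\times\B^{1},\sO^{*})$ in $H^{0}(U\times\B^{1},\sO^{*}(\infty,r))$, not to all sections, and the argument does not close.

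The paper's proof is built precisely to circumvent this. Since $\sO^{*}(\infty,r)$ and its direct images are overconvergent, the claim that $t\mapsto\pi^{n}t$ (with $|\pi^{n}|\le r$) factors through $p^{*}$ can be checked on stalks at \emph{analytic} points $a$ of $X$; by the base change theorem of de Jong--van der Put and Lemma~\ref{lem.stalks} that stalk is $H^{0}(\B^{1}_{F_{a}},\sO^{*}_{\B^{1}_{F_{a}}}(\infty,r))$, and there, over the complete field $F_{a}$, Corollary~\ref{cor.multiplicative-vanishing-disks} gives $H^{1}(\B^{1}_{F_{a}},\sO^{*}(r))=0$, so every section of the quotient \emph{does} lift to a unit of $F_{a}\langle t\rangle$; the structure of such units ($u\cdot f(t)$ with $u\in F_{a}^{*}$, $f(0)=1$, $|f-1|_{\sup}<1$, \cite[Cor.~2.2.4]{B}) then makes your scaling estimate valid. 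To repair your proposal you must add exactly these two inputs: the passage to stalks at analytic points via overconvergence and base change, and the multiplicative vanishing on the fiber disks --- the lifting statement you silently assumed is equivalent to a cohomological vanishing the paper does not have over general $U\times\B^{1}$. A side remark: the half of your plan that is fine is the scaling itself. Even for an honest unit $f\in(B\langle t\rangle)^{*}$ your factorization $f=f(0)(1+g)$ with $|g|_{\sup}<1$ is correct (fiberwise domination of the constant term plus the maximum principle), and the required $n$ is uniform since $|g(\pi^{n}t)|_{\sup}\le|\pi|^{n}|g|_{\sup}$, so the uniformity worry you raise is harmless; the lifting is the gap.
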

\begin{proof}
Obviously, $\sO^{*}_{X}(\infty,r) \xrightarrow{p^{*}} p_{*}\sO^{*}_{X\times\B^{1}}(\infty,r) \xrightarrow{\sigma^{*}} \sO^{*}_{X}(\infty,r)$ is the identity. 
Choose $n$ big enough such that $|\pi^{n}| \leq r$.
We claim that the  map
\[
p_{*}\sO^{*}_{X\times\B^{1}}(\infty,r) \to p_{*}\sO^{*}_{X\times\B^{1}}(\infty,r)
\]
induced by $t\mapsto \pi^{n}t$
factors through $\sO^{*}_{X}(\infty,r) \xrightarrow{p^{*}} p_{*}\sO^{*}_{X\times\B^{1}}(\infty,r)$.
By overconvergence again it is enough to check this on the stalk at any analytic point $a$ of $X$
(consider the image of the composition of the first map with the projection to $\coker(p^{*})$).
By base change and Lemma~\ref{lem.stalks} we have $p_{*}\sO^{*}_{X\times\B^{1}}(\infty,r)_{a} \cong H^{0}( \B^{1}_{F_{a}}, \sO^{*}_{\B^{1}_{F_{a}}}(\infty,r) )$.
By Corollary~\ref{cor.multiplicative-vanishing-disks} the natural map 
$H^{0}(\B^{1}_{F_{a}}, \sO^{*}) \to H^{0}( \B^{1}_{F_{a}}, \sO^{*}_{\B^{1}_{F_{a}}}(\infty,r) )$
is surjective.
Any element of $H^{0}(\B^{1}_{F_{a}}, \sO^{*})$ is of the form $u\cdot f(t)$ with $u \in F_{a}^{*}, f(0)=1$, and $|f(t) - 1|_{\sup}<1$ (see~\cite[Cor.~2.2.4]{B}). But then $|f(\pi^{n}t)-1|_{\sup}<|\pi^{n}|\leq r$. This implies that the map
\[H^{0}( \B^{1}_{F_{a}}, \sO^{*}_{\B^{1}_{F_{a}}}(\infty,r) ) \to H^{0}( \B^{1}_{F_{a}}, \sO^{*}_{\B^{1}_{F_{a}}}(\infty,r) )\]
induced by $t\mapsto \pi^{n}t$ factors through $F_{a}^{*}/F_{a}^{*}(r) \hookrightarrow H^{0}( \B^{1}_{F_{a}}, \sO^{*}_{\B^{1}_{F_{a}}}(\infty,r) )$, concluding the proof.
\end{proof}

\begin{proof}[Proof of Theorem~\ref{thm1}]
Note that $\Pic(A) \cong H^{1}(X,\sO^{*})$. 
Since $X=\Sp(A)$ is assumed to be smooth, Corollary~\ref{cor.mult-vanishing} implies that  there exists $r\in (0,1)$ such that the map $H^{1}(X\times \B^{1}, \sO^{*}) \to H^{1}(X\times \B^{1}, \sO^{*}(\infty,r))$ is injective.
It thus suffices to show that
\[
\sigma^{*}: \prolim{t\mapsto \pi t} H^{1}(X\times \B^{1}, \sO^{*}_{X\times\B^{1}}(\infty,r)) \to H^{1}(X, \sO^{*}_{X}(\infty,r))
\]
is a pro-isomorphism. 

Using the Leray spectral sequence, Lemma~\ref{lem.vanishing-of-R1} yields an isomorphism
\[
 H^{1}(X\times \B^{1}, \sO^{*}_{X\times\B^{1}}(\infty,r)) \cong H^{1}(X, p_{*}\sO^{*}_{X\times\B^{1}}(\infty,r)).
\]
We combine this with the pro-isomorphism 
\[
\prolim{t\mapsto \pi t} H^{1}(X, p_{*}\sO^{*}_{X\times\B^{1}}(\infty,r)) \cong  H^{1}(X, \sO^{*}_{X}(\infty,r))
\]
implied by Lemma~\ref{lem.pro-iso-direct-image} to finish the proof.
\end{proof}

\begin{proof}[Proof of Corollary~\ref{cor.A1invariance}]
Write $X$ for $\Sp(A)$, $U_{n}$ for the closed disk of radius $|\pi^{-n}|$, and $\mathbb{A}^{1,\mathrm{an}}$ for the analytic affine line over $K$. Then $X\times U_{n}$, $n=0,1,\dots$, is an admissible covering of $X\times \mathbb{A}^{1,\mathrm{an}}$. Note that the pro-systems $\prolim{n} \Pic(X\times U_{n})$ and $\prolim{t\mapsto \pi t}\Pic(A\langle t\rangle)$ are naturally isomorphic.
Taking the limit of the isomorphism of pro-abelian groups in Theorem~\ref{thm1} then gives the isomorphism
\[
\Pic(X) \cong \lim_{n} \Pic(X\times U_{n}).
\]
Hence it suffices to show that the natural map $\Pic(X\times \mathbb{A}^{1,\mathrm{an}}) \to \lim_{n} \Pic(X\times U_{n})$
is an isomorphism. The cohomological description of Picard groups yields a short exact sequence
\[
0 \to {\lim_{n}}^{1}\ \sO^{*}(X\times U_{n}) \to \Pic(X\times \mathbb{A}^{1,\mathrm{an}}) \to \lim_{n} \Pic(X\times U_{n}) \to 0.
\]
We have a natural decomposition $\sO^{*}(X\times U_{n}) \cong \sO^{*}(X) \oplus \sO^{*}_{0}(X\times U_{n})$ where $\sO^{*}_{0}(X\times U_{n})$ consists of those units that restrict to 1 on $X\subset X\times U_{n}$.
Clearly, $\lim^{1}_{n} \sO^{*}(X)=0$ and it remains to prove that $\lim_{n}^{1} \sO^{*}_{0}(X\times U_{n})$ vanishes. Note that given $f\in \sO^{*}_{0}(X\times U_{n+m})$, its restriction to $X\times U_{n}$ satisfies $|f|_{X\times U_{n}} -1|_{\sup} < |\pi^{m}|$.
Hence, given any sequence $(g_{n})_{n=0}^{\infty}$ with $g_{n}\in \sO^{*}_{0}(X\times U_{n})$, the product
\[
f_{n} := \prod_{k=n}^{\infty} g_{k}|_{X\times U_{n}} \in \sO^{*}_{0}(X\times U_{n})
\]  
converges. By construction we have $g_{n} = f_{n}\cdot (f_{n+1}|_{X\times U_{n}})^{-1}$ for every $n\geq 0$. This shows the desired vanishing of the $\lim^{1}$-term.
\end{proof}

\section{\texorpdfstring{$K_0$}{K0}-invariance}\label{sec.k0}

In this section we  assume that $K$ is a  complete  discretely valued field. 
Then for an affinoid algebra $A/K$ the ring of power bounded elements $A^\circ$ is noetherian, excellent, and of finite
Krull dimension, for excellence see~\cite[Sec.~I.9]{Il}.
Let
$\pi\in K^\circ$ be a prime element.

Let $\mathcal X\to \Spec(A^\circ)$ be a proper morphism of schemes which is an isomorphism over $\Spec(A)$.
For an integer
$n>0$ set $\sX_n=\sX \otimes_{K^\circ}K^\circ/(\pi^n)$.

\begin{prop}\label{prop.k0inj}
There exists $n>0$ such that 
\[
K_0(\sX) \to K_0( \sX_n)
\]
is injective.
\end{prop}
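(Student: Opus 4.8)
The plan is to rephrase the statement in pro-language and then deduce it from formal GAGA together with a continuity property of $K$-theory. For $m\ge n$ the ideal of $\sX_n$ in $\sX_m$ is $\pi^n\sO_{\sX_m}$, which is nilpotent, so the $K_0(\sX_n)$ form an inverse system under restriction and the subgroups $\ker\big(K_0(\sX)\to K_0(\sX_n)\big)$ form a decreasing tower with injective transition maps. Since a tower with injective transition maps is pro-zero exactly when one of its terms vanishes, the proposition is equivalent to the assertion that this tower of kernels is \emph{pro-zero}. By hypothesis $A^\circ$ is noetherian and $\pi$-adically complete and the admissible blow-up $\sX\to\Spec A^\circ$ is proper; I write $\hat\sX$ for the formal completion of $\sX$ along the special fibre $\sX_1$, so that $\hat\sX=\prolim{n}\sX_n$.

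First I would invoke Grothendieck's existence theorem for the proper morphism $\sX\to\Spec A^\circ$ over the complete noetherian base $A^\circ$. Completion along $\sX_1$ gives a fully faithful exact functor $\operatorname{Coh}(\sX)\to\lim_n\operatorname{Coh}(\sX_n)$; in particular, for vector bundles $\sE,\sF$ on $\sX$ one has
\[
\Hom_{\sX}(\sE,\sF)=\lim_n\Hom_{\sX_n}(\sE_n,\sF_n),
\]
and, since a lift of an isomorphism along a nilpotent thickening is again an isomorphism, a compatible system of isomorphisms $\sE_n\simeq\sF_n$ algebraizes uniquely to an isomorphism $\sE\simeq\sF$ on $\sX$. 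In particular a virtual bundle that becomes stably trivial on every $\sX_n$ is stably trivial on $\sX$, so $\bigcap_n\ker\big(K_0(\sX)\to K_0(\sX_n)\big)=0$.

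The main obstacle is to promote this ``triviality in the limit'' to triviality at a single finite stage $n$, uniformly over all classes in $K_0(\sX)$. This is genuinely subtle: the transition maps $K_0(\sX_{n+1})\to K_0(\sX_n)$ are not isomorphisms---already $\Pic$ changes along the infinitesimal thickenings---so the kernel tower is not eventually constant for a formal reason, and $K_0(\sX)$ carries no finiteness that would allow a reduction to finitely many generators. I would resolve this by upgrading the categorical statement to a pro-equivalence of $K$-theory pro-spectra
\[
K(\sX)\xrightarrow{\ \sim\ }\prolim{n}K(\sX_n).
\]
The input is formal GAGA for perfect complexes, $\operatorname{Perf}(\sX)\simeq\lim_n\operatorname{Perf}(\sX_n)$, together with a continuity argument turning this limit equivalence into a pro-equivalence: the obstruction to lifting (stable) isomorphisms of bundles step by step lies in the coherent cohomology of $\mathcal{H}om(\sE,\sF)$ against the graded pieces $\pi^m\sO/\pi^{m+1}\sO$, and the theorem on formal functions (Artin--Rees) forces the relevant transition maps to be pro-isomorphisms. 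This is the continuity/pro-excision package for the $K$-theory spectrum in the spirit of \cite{Mo}. Taking $\pi_0$ of the resulting pro-equivalence says exactly that the kernel tower is pro-zero, which provides the required $n$.
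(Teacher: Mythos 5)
Your opening reductions are fine: the kernels $\ker\bigl(K_0(\sX)\to K_0(\sX_n)\bigr)$ form a tower with injective transition maps, so pro-vanishing is indeed equivalent to the vanishing of a single term, and you correctly identify the passage from ``trivial in the limit'' to ``trivial at a finite stage'' as the crux. But the tool you propose for this step is not available: the claimed pro-equivalence $K(\sX)\to\prolim{n}K(\sX_{n})$ is \emph{false}, already in the trivial case $\sX=\Spec A^{\circ}$ (say $A^{\circ}=K^{\circ}=\C\llbracket\pi\rrbracket$). On $\pi_{1}$, the kernel of $K_{1}(A^{\circ})\to K_{1}(A^{\circ}/(\pi^{n}))$ contains the classes of the units $1+\pi^{n}A^{\circ}$, which the determinant detects as nontrivial in $K_{1}(A^{\circ})$; these kernels form a tower of nonzero subgroups with injective transition maps, hence are not pro-zero. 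Formal GAGA does give the limit equivalence $\operatorname{Perf}(\sX)\simeq\lim_{n}\operatorname{Perf}(\sX_{n})$, but $K$-theory does not convert such a limit of categories into a limit of spectra, and a fortiori not into a pro-equivalence; and Artin--Rees/formal functions gives no pro-vanishing of the obstruction groups you invoke --- they are exactly of the shape $1+\pi^{n}(\cdots)$ above, and die only in the limit, never at a finite stage. Since on $\pi_{0}$ your claimed pro-equivalence already contains the proposition, nothing non-circular remains once the spectrum-level claim is removed. A smaller gap: even your intersection statement $\bigcap_{n}\ker\bigl(K_0(\sX)\to K_0(\sX_n)\bigr)=0$ does not follow from full faithfulness alone, because stable trivializations at different levels involve stabilizations of growing rank and need not be compatible, so one cannot directly algebraize them (though, as you say, this statement is in any case too weak).

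The paper's proof avoids any continuity statement along the tower $(\sX_{n})$. It applies Morrow's pro-cdh descent \cite{Mo} to the abstract blow-up square formed by $\sX\to\Spec A^{\circ}$ and the thickenings, obtaining a pro-isomorphism $\prolim{n}K_{0}(A^{\circ},A^{\circ}/(\pi^{n}))\to\prolim{n}K_{0}(\sX,\sX_{n})$ of \emph{relative} $K$-groups; the affine relative groups then vanish termwise for elementary reasons: $K_{1}(A^{\circ})\to K_{1}(A^{\circ}/(\pi^{n}))$ is surjective since $\pi$ lies in the Jacobson radical of the complete ring $A^{\circ}$, and $K_{0}(A^{\circ})\to K_{0}(A^{\circ}/(\pi^{n}))$ is an isomorphism by idempotent lifting. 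Hence $K_{0}(\sX,\sX_{n})$ is pro-zero, and the fibre sequence yields injectivity at some finite $n$. The essential mechanism is thus the invariance of relative $K$-theory under the admissible blow-up, transferring the problem to the affine base where injectivity holds at \emph{every} level --- note that on $\sX$ itself only the pro-statement is true. A telltale sign of the gap is that your argument uses only properness of $\sX$ over a complete noetherian base and never that $\sX$ is a blow-up of an affine scheme, whereas for the paper this is exactly the point; to repair your proof, replace the continuity claim by the relative pro-cdh statement from \cite{Mo} and the two elementary facts above.
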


\begin{proof}
Let $K(\sX,\sX_n)$ be the homotopy fibre of the map $K(\sX)\to K(\sX_n)$ between
non-connective $K$-theory spectra \cite[Sec.~IV.10]{W} and let $K_i(\sX,\sX_n)$ be its
homotopy groups.
By ``pro-cdh-descent'' \cite[Thm.~A]{KST} the natural map 
\[
\prolim{n} K_0(A^\circ,A^\circ/(\pi^n))  \to \prolim{n} K_0(\sX,\sX_n)
\]
is a pro-isomorphism. For each $n$ we have an exact sequence
\[
K_1(A^\circ) \to K_1(A^\circ/(\pi^n)) \to K_0(A^\circ,A^\circ/(\pi^n)) \to K_0(A^\circ) \xrightarrow{\sim} K_0(A^\circ/(\pi^n))
\]
where the left map is surjective  \cite[Rmk.~III.1.2.3]{W} and the right map is an isomorphism  \cite[Lem.~II.2.2]{W}, since $A^{\circ}$ is $\pi$-adically complete. So $K_0(\sX,\sX_n)$ vanishes as a pro-system in $n$. By the exact  sequence
\[
 K_0(\sX,\sX_n) \to K_0(\sX) \to K_0(\sX_n)
\]
this finishes the proof of the proposition.
\end{proof}

\begin{lem}\label{lem.locseq}
If $\sX$ is a regular scheme we obtain a natural exact sequence
\[
G_0(\sX_1)\to K_0(\sX) \to K_0(A) \to 0,
\]
where $G_0$ is the Grothendieck group of coherent sheaves.
\end{lem}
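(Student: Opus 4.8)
The plan is to exhibit the asserted sequence as the tail of Quillen's localization sequence in $G$-theory for the decomposition of $\sX$ into the special fibre $\sX_1$ and its open complement, and then to use the regularity of $\sX$ to translate from $G$-theory to $K$-theory. First I would identify that complement. By definition $\sX_1 = \sX\otimes_{K^\circ}K^\circ/(\pi)$ is the closed subscheme of $\sX$ cut out by $\pi$, so its open complement is $U=\sX[1/\pi]=\sX\times_{\Spec A^\circ}\Spec A^\circ[1/\pi]$. Since every $f\in A$ has bounded supremum seminorm, we have $\pi^N f\in A^\circ$ for $N\gg 0$, hence $A^\circ[1/\pi]=A$ and $\Spec A^\circ[1/\pi]=\Spec A$. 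As $\sX\to\Spec A^\circ$ is an admissible blow-up, it is an isomorphism over the locus $\{\pi\neq 0\}=\Spec A$, so
\[
U\cong \Spec A.
\]
In particular $U$, being open in the regular scheme $\sX$, is itself regular.

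Next I would apply Quillen's localization theorem in $G$-theory \cite[Ch.~V]{W} to the closed immersion $i\colon \sX_1\hookrightarrow \sX$ with open complement $j\colon U\hookrightarrow\sX$; note that $A^\circ$, and hence $\sX$ and $U$, are noetherian of finite Krull dimension, so the theorem applies. Its tail is the right-exact sequence
\[
G_0(\sX_1)\xrightarrow{\ i_*\ } G_0(\sX)\xrightarrow{\ j^*\ } G_0(U)\to 0,
\]
where $i_*$ is pushforward along the closed immersion and $j^*$ is restriction; surjectivity on the right expresses that every coherent sheaf on $U$ extends to a coherent sheaf on $\sX$. The scheme structure on $\sX_1$ plays no role here, since $G_0$ of a closed subscheme depends only on the underlying reduced closed subset by dévissage.

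Finally, since $\sX$ is regular noetherian, Quillen's resolution theorem gives a natural isomorphism $K_0(\sX)\xrightarrow{\sim}G_0(\sX)$, and likewise $K_0(A)=K_0(U)\xrightarrow{\sim}G_0(U)$ because $U$ is regular \cite[Ch.~II]{W}. Substituting these identifications into the displayed localization sequence yields precisely
\[
G_0(\sX_1)\to K_0(\sX)\to K_0(A)\to 0.
\]
I expect the only point genuinely requiring care to be the identification $U\cong \Spec A$, which rests on the admissibility of the blow-up (i.e.\ that its centre is cosupported on $\sX_1$); the remaining steps are formal invocations of the localization and resolution theorems.
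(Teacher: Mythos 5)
Your proof is correct, and it supplies precisely the standard argument the authors evidently had in mind: the paper in fact states Lemma~\ref{lem.locseq} without any proof, and the intended justification is exactly your combination of the identification $\sX \setminus \sX_{1} \cong \Spec A$ (via $A^{\circ}[1/\pi]=A$ and admissibility of the blow-up), the tail of Quillen's localization sequence in $G$-theory together with d\'evissage to handle the scheme structure on $\sX_{1}$, and the Cartan isomorphisms $K_{0}\cong G_{0}$ on the regular schemes $\sX$ and $\Spec A$ (for the former, note that $\sX$ is projective over the affine scheme $\Spec A^{\circ}$, hence has an ample family of line bundles, so the resolution theorem indeed applies). There is nothing to add.
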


\begin{proof}[Proof of Theorem \ref{thm2}]
In case the residue field of $K$ has characteristic zero, 
 $A^\circ$ contains $\mathbb Q$ and is excellent. Hence there exists a blow-up $\sX\to A^\circ$, whose center is (set theoretically) contained  in the closed fibre $\Spec(A^{\circ}/\pi)$,  such
that $\sX$ is a regular scheme \cite[Thm.~1.1]{Te}. So we can now assume in the general case that $\sX \to \Spec(A^{\circ})$ is a regular model of $A$ in the sense of the introduction.   Let $A^\circ\langle  t\rangle \subset
A^\circ\llbracket  t \rrbracket$ be
those formal power series for which the coefficients converge to zero. Note that
$A^\circ\to A^\circ\langle  t\rangle$ is a regular ring homomorphism, so $\sX'=\sX
\otimes_{A^\circ} A^\circ\langle  t\rangle $ is a regular scheme with generic fibre $\Spec
(A\langle t\rangle)$. Set $\sX'_n=\sX'
\otimes_{K^\circ}K^\circ/(\pi^n)$. 

Applying Lemma~\ref{lem.locseq} to $\sX$ and $\sX'$  we get a commutative diagram with
exact rows
\[
\xymatrix{
G_0(\sX_1) \ar[r]  &  K_0(\sX) \ar[r]  & K_0(A) \ar[r] & 0\\
G_0(\sX'_1) \ar[r] \ar[u]_{\wr}^{\sigma^*}  &  K_0(\sX') \ar[r] \ar[u]^{\sigma^*}  & K_0(A\langle  t\rangle)\ar[u]^{\sigma^*}  \ar[r] & 0
}
\]
where $\sigma$ is the zero-section induced by $t\mapsto 0$. The left vertical arrow is an
isomorphism by homotopy invariance of $G$-theory \cite[Thm.~II.6.5]{W} as $\sX'_1 =\mathbb A^1_{\sX_1}$. In order to prove Theorem~\ref{thm2} we have
to show that 
\[
\sigma^*: \prolim{t\mapsto \pi t} K_0(A\langle  t\rangle)\to K_0(A)
\]
is a pro-monomorphism. According to Proposition~\ref{prop.k0inj} we find $n>0$ such that
$K_0(\sX')\to K_0(\sX'_n)$ is injective. So by a diagram chase it suffices to show that 
\[
\sigma: \prolim{t\mapsto \pi t} K_0(\sX'_n) \to K_0(\sX_n)
\]
is a pro-monomorphism, which is clear as the morphism $\sX'_n \xrightarrow{t\mapsto \pi^n t}\sX'_n$
factors through $\sX_n$.
\end{proof}

\end{document}